\documentclass[english]{extarticle}
\usepackage[T1]{fontenc}
\usepackage[latin9]{inputenc}
\usepackage{xcolor}
\usepackage{babel}
\usepackage{float}
\usepackage{mathrsfs}
\usepackage{amsmath}
\usepackage{amsthm}
\usepackage{amssymb}
\usepackage{graphicx}
\usepackage{geometry}
\usepackage{microtype}
\usepackage[pdfusetitle,
 bookmarks=true,bookmarksnumbered=true,bookmarksopen=false,
 breaklinks=false,pdfborder={0 0 1},backref=false,colorlinks=true]
 {hyperref}
\hypersetup{
 linkcolor=blue,urlcolor=blue,citecolor=blue,pdfstartview={FitH},hyperfootnotes=false}

\makeatletter
\numberwithin{equation}{section}
\numberwithin{figure}{section}

\usepackage{amsmath}
\usepackage{amssymb}
\usepackage{amsthm}
\usepackage{mathtools}
\usepackage{graphicx}
\usepackage{tikz-cd}
\usepackage{mlmodern}
\usepackage[T1]{fontenc}
\usepackage{xcolor}
\usepackage{pdfrender}
\usepackage{mathtools}        
\usepackage{bm}               
\usepackage{mathrsfs}         
\usepackage{dsfont}           
\usepackage{cancel}           
\usepackage{graphicx}
\usepackage{booktabs}

\AtBeginDocument{

}

\makeatother

\theoremstyle{plain}
\newtheorem{thm}{\protect\theoremname}
\theoremstyle{definition}
\newtheorem{defn}[thm]{\protect\definitionname}
\theoremstyle{remark}
\newtheorem{rem}[thm]{\protect\remarkname}
\theoremstyle{plain}
\newtheorem{prop}[thm]{\protect\propositionname}
\newtheorem{lem}[thm]{\protect\lemmaname}
\newtheorem{cor}[thm]{\protect\corollaryname}
\providecommand{\corollaryname}{Corollary}
\providecommand{\definitionname}{Definition}
\providecommand{\lemmaname}{Lemma}
\providecommand{\propositionname}{Proposition}
\providecommand{\remarkname}{Remark}
\providecommand{\theoremname}{Theorem}

\begin{document}
\title{Brownian Convergence of Planar Domains and Stability of the Planar
Skorokhod Embedding Problem}
\author{Maher Boudabra \thanks{King Fahd University of Petroleum and Minerals, KSA.},
Mrabet Becher \thanks{Monastir Preparatory Engineering Institute, Tunisia.},
Fathi Haggui \thanks{Monastir Preparatory Engineering Institute, Tunisia.}}
\maketitle
\begin{abstract}
We present a numerical framework for approximating the $\mu$-domain
in the planar Skorokhod embedding problem PSEP, recently introduced
in \cite{gross2019}. We show that under weak convergence of a sequence
of probability measures $(\mu_{n})_{n}$, the corresponding sequence
of $\mu_{n}$-domains converges, in an appropriate sense, to the domain
associated with the limit measure $\mu$. In addition, we provide
implementation strategies, convergence rate estimates, and a numerical
example. The method is robust and versatile, offering a concrete computational
approach for the approximation of $\mu$-domains. As part of this
analysis, we introduce a novel mode of convergence for planar domains
via planar Brownian motion, which we call $p$-Brownian convergence. 
\end{abstract}
\textbf{Keywords}: Planar Brownian motion; planar Skorokhod embedding
problem \\
\textbf{MSC}: 60J65; 65E10; 30C35

\section{Introduction and motivation}

In 2019, R.~Gross introduced a planar version of the Skorokhod embedding
problem PSEP \cite{gross2019}, originally formulated in 1961 in one
dimension. See \cite{Obloj2004} for a concise survey. The planar
question studied by Gross is as follows: given a probability measure
$\mu$ with zero mean and finite second moment, does there exist a
simply connected domain $U$ (containing the origin) such that, if
$(Z_{t})_{t\geq0}$ is a standard planar Brownian motion and $\tau$
is the exit time from $U$, then $\Re(Z_{\tau})$ has distribution
$\mu$? Gross gave an affirmative answer via a smart and explicit
construction of the domain. One year later, Boudabra and Markowsky
published two papers \cite{boudabra2019remarks,Boudabra2020} on the
problem. In the first, they showed that the problem is solvable for
any distribution with a finite $p$-th moment whenever $p>1$, hence
extending Gross' technique to cover all such distributions. They also
established a uniqueness criterion. In the second, they introduced
a new class of domains that solve the Skorokhod embedding problem
and provided another uniqueness criterion. They coined the term \emph{$\mu$-domain}
to denote any simply connected domain that solves the problem. In
that work, the statement was altered to incorporate the finiteness
of $\mathbf{E}(\tau^{\frac{p}{2}})$ as a condition. Note that, in
Gross' original work ($p=2$), the finiteness of $\mathbf{E}(\tau)$
was a consequence of his construction. In \cite{BOUDABRA2026}, the
author addressed the case $p=1$, closing that line of investigation.
Note that the two constructions of Gross and Boudabra-Markowsky lead
to different geometries. 

\begin{figure}[H]
\centering \includegraphics[width=12cm,totalheight=12cm,keepaspectratio]{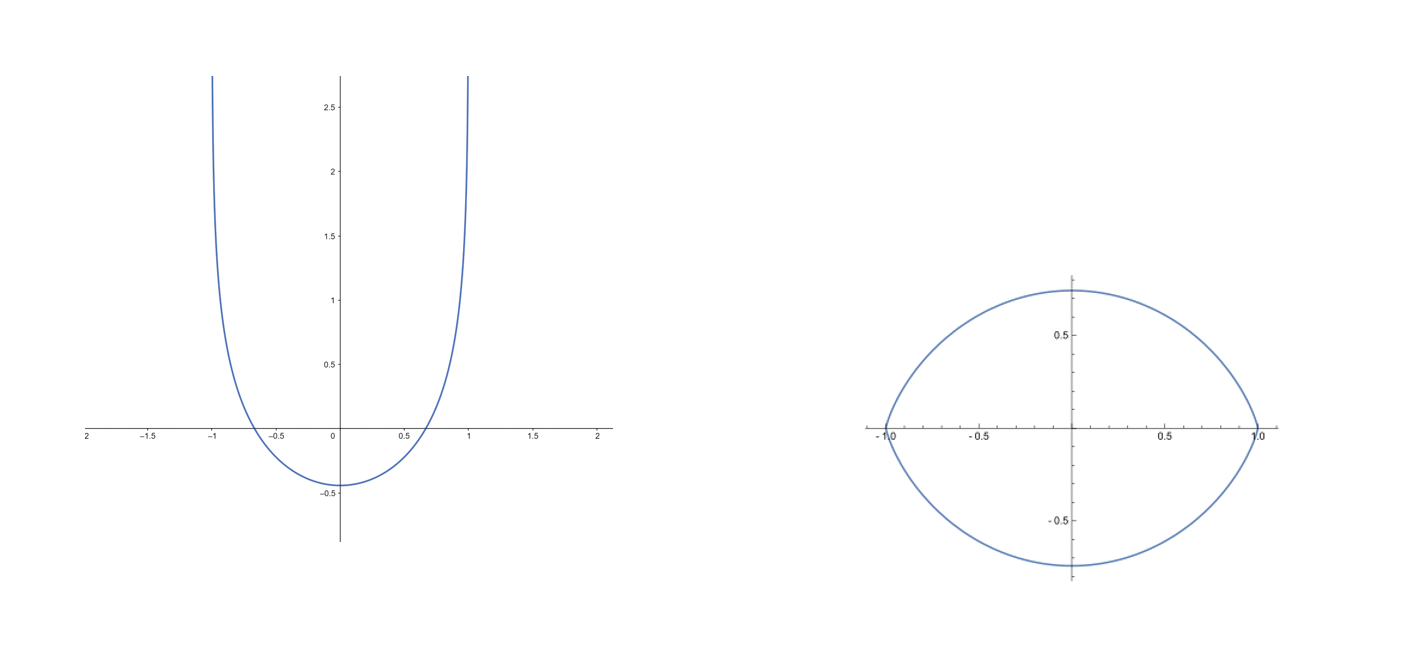}\caption{{\small For the uniform distribution on $(-1,1)$, the left domain
is Boudabra--Markowsky's solution while Gross' solution is on the
right.}}
\end{figure}

If $(a,b)$ is not charged by $\mu$, i.e., $(a,b)$ is a null set
for $\mu$ \footnote{This means that the c.d.f.\ of $\mu$ is constant on $(a,b)$.},
then any $\mu$-domain must contain the vertical strip $(a,b)\times(-\infty,+\infty)$.
This is a universal property regardless of the construction.

For the sake of convenience, we explain Gross' approach to solve the
PSEP. First, recall the basic definitions of the c.d.f. and its underlying
quantile function. The c.d.f.\ of $\mu$ is defined for all $x\in\mathbb{R}$
by
\[
F(x)=\mu((-\infty,x]).
\]
The quantile function is 
\[
q(u)=\inf\{x:\,F(x)\ge u\},\quad u\in(0,1).
\]
$q$ is then the pseudo-inverse of $F$. In particular $q=F^{-1}$
when $F$ is one to one. The quantile function plays a central role
in statistics: when fed with inputs uniformly distributed in $(0,1)$,
$q$ will generates values sampling as $\mu$, i.e., 
\begin{equation}
q(\mathrm{Uni}(0,1))\sim\mu.\label{sampling}
\end{equation}
First, Gross expanded the function 
\[
\varphi:\begin{alignedat}{1}(-\pi,\pi) & \longrightarrow\mathbb{R}\\
\theta & \longmapsto q({\textstyle \frac{\vert\theta\vert}{\pi}})
\end{alignedat}
\]
as a Fourier series, i.e.
\begin{equation}
\varphi(\theta)=\sum^{+\infty}_{n=1}a_{n}\cos(n\theta).\label{fouier equality}
\end{equation}
Then he showed that the function
\[
G(z)=\sum^{+\infty}_{n=1}a_{n}z^{n},
\]
acting on the unit disc $\mathbb{D}$, is univalent. Finally he concluded
that $U:=G(\mathbb{D})$ solves the problem. Besides complex analysis
techniques, a core machinery of Gross' construction is the following
result. 
\begin{thm}[L\'evy's theorem]
 \label{thm:conformal}Let $f:U\rightarrow\mathbb{C}$ be a non constant
analytic function and $(Z_{t})_{t\geq0}$ be a planar Brownian motion
running inside $U$. Then there is a planar Brownian motion $W_{t}$
such that $f(Z_{t})=W_{\sigma(t)}$ with 
\[
\sigma(t)=\int^{t}_{0}\vert f'(Z_{s})\vert^{2}ds
\]
and $t\in[0,\tau_{U}]$. 
\end{thm}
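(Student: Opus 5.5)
The plan is to prove L\'evy's theorem through It\^o calculus and the Dambis--Dubins--Schwarz (DDS) time change, applied to the real and imaginary parts of $f(Z_t)$ simultaneously. Write $f=u+iv$ and $Z_t=X_t+iY_t$, where $X,Y$ are independent standard real Brownian motions and $t<\tau_U$. Since $u$ and $v$ are harmonic, It\^o's formula applied to $u(Z_t)$ and $v(Z_t)$ has vanishing drift:
\[
u(Z_t)=u(Z_0)+\int_0^t u_x(Z_s)\,dX_s+\int_0^t u_y(Z_s)\,dY_s ,
\]
and similarly for $v$. So both are continuous local martingales on $[0,\tau_U)$. To make this rigorous, I will localize with an exhausting sequence of relatively compact subdomains $U_k\uparrow U$ and stopping times $\tau_k=\inf\{t:Z_t\notin U_k\}\wedge k$. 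On $U_k$ the derivatives of $f$ are bounded, so the stopped processes are genuine martingales.

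Next I compute the brackets using the Cauchy--Riemann equations $u_x=v_y$ and $u_y=-v_x$. This gives
\[
d\langle u(Z)\rangle_t=(u_x^2+u_y^2)(Z_t)\,dt=|f'(Z_t)|^2dt,
\]
and the same expression for $d\langle v(Z)\rangle_t$. The cross-variation is
\[
d\langle u(Z),v(Z)\rangle_t=(u_xv_x+u_yv_y)(Z_t)\,dt=(-u_xu_y+u_yu_x)(Z_t)\,dt=0 .
\]
Hence $M_t=f(Z_t)$ is a conformal local martingale: its real and imaginary parts have a common quadratic variation $\sigma(t)=\int_0^t|f'(Z_s)|^2ds$ and are orthogonal.

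I then apply the multidimensional DDS theorem (Knight's theorem). Define the inverse $\sigma^{-1}(s)=\inf\{t:\sigma(t)>s\}$ and set $W_s=M_{\sigma^{-1}(s)}$ for $s<\sigma(\tau_U)$. Knight's theorem says that two orthogonal continuous local martingales with equal brackets, time-changed by that common bracket, form a two-dimensional Brownian motion up to time $\sigma(\tau_U)$. Beyond that time, if $\sigma(\tau_U)<\infty$, I extend $W$ by adding an independent planar Brownian motion, enlarging the probability space if necessary. By construction $f(Z_t)=W_{\sigma(t)}$, and the identity extends to $t=\tau_U$ by continuity wherever the limit exists.

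The main obstacle is the time change where $\sigma$ fails to be strictly increasing, and the behaviour at $\tau_U$. Since $f$ is nonconstant, $f'$ has isolated zeros, and planar Brownian motion spends zero Lebesgue time at any countable set. So $\sigma$ is strictly increasing and continuous, $\sigma^{-1}$ is a genuine inverse, and the flat-interval subtleties of DDS disappear. Identifying the law of $W$ is cleanest via L\'evy's characterization: I will check that $W$ is a continuous local martingale in the time-changed filtration $\mathcal{F}_{\sigma^{-1}(s)}$, with brackets $\langle\Re W\rangle_s=\langle\Im W\rangle_s=s$ and $\langle\Re W,\Im W\rangle_s=0$. This uses the optional stopping of the localized martingales at the stopping times $\sigma^{-1}(s)\wedge\tau_k$ and then letting $k\to\infty$.
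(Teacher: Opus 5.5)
Your proposal is correct and is the standard argument: It\^o's formula with the Cauchy--Riemann equations makes $f(Z_t)$ a conformal local martingale with common bracket $\sigma(t)$ on $[0,\tau_U)$, and the Dambis--Dubins--Schwarz/Knight time change, extended by an independent Brownian motion after $\sigma(\tau_U)$ and identified via L\'evy's characterization, produces $W$. The paper gives no proof of its own and simply cites Revuz--Yor, where the result is proved by this same route, so beyond the localization at $\tau_U$ that you already flagged there is nothing to add.
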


The identity \eqref{fouier equality} holds both almost everywhere
and in $L^{p}$. This is the subject of Hunt-Carleson theorem \cite{arias2002pointwise,fefferman1973pointwise}.
As one can see, Gross' construction relies on the knowledge of the
Fourier coefficients of $\varphi$. These coefficients are difficult
to compute most of the time, especially when the c.d.f.\ is not explicit,
as in the normal distribution case. This motivates an approximation
perspective: when exact solutions are hard to obtain, we seek to approximate
(in a suitable sense) the underlying $\mu$-domain by a sequence of
domains. A natural idea is to consider probability measures $\mu_{n}\Longrightarrow\mu$,
i.e. convergence in law, and to study the behavior of the corresponding
$\mu_{n}$-domains.\\
\\
\\

We develop a numerical framework to approximate $\mu$-domains for
the PSEP; we analyze the convergence of $\mu_{n}$-domains under weak
convergence $\mu_{n}\Longrightarrow\mu$; we provide implementation
strategies, estimates of convergence rates, and numerical experiments
illustrating the method's robustness. We namely introduce a novel
mode of convergence for domains that underpins the analysis. Following
Gross' approach, let $G_{n}$ (resp.\ $G$) be the univalent map
generated by $\mu_{n}$ (resp.\ $\mu$); that is, 
\[
G_{n}(z)=\sum^{+\infty}_{n=1}a_{k,n}\,z^{k},
\]
\[
G(z)=\sum^{+\infty}_{n=1}a_{k}\,z^{k},
\]
where $a_{k,n}$ (resp.\ $a_{k}$) is the $k$-th Fourier coefficient
of $\varphi_{n}(\theta):=q_{n}(\frac{|\theta|}{\pi})$ (resp.\ $\varphi$).
Our aim is to show that $Z_{\tau_{n}}\to Z_{\tau}$ (hence the real
parts converge, as required) and $\tau_{n}\to\tau$ in some strong
way. Throughout the paper, we use $\mu$ as a probability distribution
with a finite $p$-th moment for some $p\ge1$ and $(\mu_{n})_{n}$
denotes a sequence of probability measures converging weakly to $\mu$
(additional assumptions will be stated). The random times $\tau$
(resp. $\tau_{n}$) denotes the exit time from the $\mu$-domain (resp.
$\mu_{n}$-domain). A subscript of the underlying domain is added
if needed. 

The following mathematical terms are adopted.
\begin{itemize}
\item $\mathbb{D}$ denotes the unit disc.
\item $\mathbb{S}^{1}$ denotes the unit circle and it is equipped with
the normalized Lebesgue measure. 
\end{itemize}
Our analysis suggests a broader perspective. The approximation of
the $\mu$-domains naturally leads to the introduction of a new notion
of convergence for planar domains, which we term \emph{$p$-Brownian
convergence} ($p>0$). 
\begin{defn}
\label{p-conv} Let $U$ be planar domain and let $(U_{n})_{n}$ be
a sequence of planar domains, all containing a common point, say the
origin. For each $n$, let $(Z^{[n]}_{t})_{t\geq0}$ be a planar Brownian
motion started at $0$ and denote by $\tau_{n}$ its exit time from
$U_{n}$. Similarly, let $(Z_{t})_{t\geq0}$ be a planar Brownian
motion started at $0$ and denote by $\tau$ its exit time from $U$.
We say that $(U_{n})_{n}$ converges to a domain $U$ in the \textbf{$p$-Brownian
sense} if there exists a coupling of the laws of $(Z^{[n]}_{\tau_{n}},\tau_{n})$
and $(Z_{\tau},\tau)$ on a common probability space such that 
\[
\mathscr{C}_{1}:\begin{alignedat}{1}\mathbf{E}\left(\big|Z^{[n]}_{\tau_{n}}-Z_{\tau}\big|^{p}\right) & \underset{n\to+\infty}{\longrightarrow}0\end{alignedat}
\]
and 
\[
\mathscr{C}_{2}:\begin{alignedat}{1}\mathbf{E}\left(\big|\tau_{n}-\tau\big|^{\frac{p}{2}}\right) & \underset{n\to+\infty}{\longrightarrow}0\end{alignedat}
.
\]

This definition captures the geometric proximity of the domains in
terms of Brownian trajectories. More discussion of the \emph{$p$-Brownian
convergence }is presented in the last section. The main result of
this work is\emph{ }
\end{defn}

\begin{thm}[Stability of the PSEP]
\label{stability} If $G_{n}\underset{n\to\infty}{\longrightarrow}G$
in the Hardy space $\mathbf{H}^{p}(\mathbb{D})$ for some $p\geq1$
then the sequence of $\mu_{n}$-domains converges to the target $\mu$-domain
in the $p$-Brownian sense. 
\end{thm}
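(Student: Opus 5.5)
The natural coupling is to run a single planar Brownian motion $(B_t)_{t\ge0}$ in the unit disc $\mathbb{D}$, started at $0$, with exit time $T$ from $\mathbb{D}$, and to push it forward by the conformal maps $G_n$ and $G$. Both vanish at $0$, since their Taylor series have no constant term. By Theorem \ref{thm:conformal}, $G_n(B_t)$ is a time-changed planar Brownian motion started at $0$, and since $G_n$ is univalent from $\mathbb{D}$ onto the $\mu_n$-domain $U_n$, it exits $U_n$ exactly when $B$ exits $\mathbb{D}$. Hence $Z^{[n]}_{\tau_n}$ may be realised as $G_n(B_T)$, understood as the radial boundary value of $G_n$ at $B_T$, which exists a.e. since $G_n\in\mathbf{H}^p$. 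The exit time is
\[
\tau_n=\int_0^T|G_n'(B_s)|^2\,ds .
\]
The same construction with $G$ gives $(Z_\tau,\tau)$. All pairs therefore live on one probability space, which is the coupling required in Definition \ref{p-conv}.

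For $\mathscr{C}_1$: $B_T$ is uniformly distributed on $\mathbb{S}^1$ with the normalized measure, so
\[
\mathbf{E}|Z^{[n]}_{\tau_n}-Z_\tau|^p=\int_{\mathbb{S}^1}|G_n-G|^p\,d\theta=\|G_n-G\|_{\mathbf{H}^p}^p\to0 .
\]
Here we use that the $\mathbf{H}^p$ norm equals the $L^p$ norm of the boundary function. This part is immediate.

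For $\mathscr{C}_2$: the elementary inequality $|a^2-b^2|\le|a-b|(a+b)$ gives
\[
|\tau_n-\tau|\le\int_0^T|G_n'-G'|(B_s)\,\big(|G_n'|+|G'|\big)(B_s)\,ds .
\]
Applying Cauchy--Schwarz to the $ds$-integral yields $|\tau_n-\tau|\le \sigma_n^{1/2}\,(2\tau_n+2\tau)^{1/2}$, where
\[
\sigma_n:=\int_0^T|(G_n-G)'(B_s)|^2\,ds .
\]
By Theorem \ref{thm:conformal} applied to the analytic (possibly non-univalent) function $H_n=G_n-G$, the quantity $\sigma_n$ is the total quadratic variation of the martingale $H_n(B_{t\wedge T})$; when $H_n$ is constant, $\sigma_n=0$ trivially. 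Raising the bound to the power $p/2$ and applying Cauchy--Schwarz in expectation gives
\[
\mathbf{E}|\tau_n-\tau|^{p/2}\le\big(\mathbf{E}\,\sigma_n^{p/2}\big)^{1/2}\big(\mathbf{E}(2\tau_n+2\tau)^{p/2}\big)^{1/2}.
\]
Now invoke the Burkholder--Davis--Gundy inequality for the continuous martingale $H_n(B_{t\wedge T})$, which starts at $0$; this is valid for every exponent $p>0$, in particular $p\ge1$. Combined with Doob's inequality, or with the maximal inequality for nontangential/radial maximal functions of $\mathbf{H}^p$ functions (Burkholder--Gundy--Silverstein), which also covers $p=1$, it gives
\[
\mathbf{E}\,\sigma_n^{p/2}\asymp\mathbf{E}\sup_{t\le T}|H_n(B_t)|^p\le C_p\|G_n-G\|^p_{\mathbf{H}^p}\to0 .
\]
By the same argument, $\mathbf{E}\tau_n^{p/2}\le C_p\|G_n\|^p_{\mathbf{H}^p}$, which is bounded because $G_n\to G$ in $\mathbf{H}^p$. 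Similarly $\mathbf{E}\tau^{p/2}<\infty$. Hence $\mathscr{C}_2$ follows.

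The main obstacle is the justification of the Burkholder-type step: one must check that the Brownian martingale $H_n(B_{t\wedge T})$ has terminal value equal to the boundary function and that its maximal function is controlled by the $\mathbf{H}^p$ norm. This is delicate at $p=1$, where Doob's $L^p$ inequality fails and one must use the Burkholder--Gundy--Silverstein characterisation of $\mathbf{H}^p$ via the Brownian maximal function. I would cite this characterisation directly, since it gives $\mathbf{E}\sup_{t<T}|H(B_t)|^p\asymp\|H\|^p_{\mathbf{H}^p}$ for all $p>0$ and $H(0)=0$. The remaining points, a.e. existence of boundary values and identification of exit times via univalence, are standard.
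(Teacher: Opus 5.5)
Your proof is correct and follows essentially the same route as the paper: the disc exit-point coupling through $B_T$, the identity $\mathbf{E}|Z^{[n]}_{\tau_n}-Z_\tau|^p=\Vert G_n-G\Vert^p_{\mathbf{H}^p(\mathbb{D})}$ for $\mathscr{C}_1$, and, for $\mathscr{C}_2$, a pathwise Cauchy--Schwarz bound, then Cauchy--Schwarz in expectation, then the Burkholder/BDG equivalence $\mathbf{E}(\int_0^T|h'(B_s)|^2ds)^{p/2}\asymp\Vert h\Vert^p_{\mathbf{H}^p(\mathbb{D})}$. The only difference is cosmetic: you bound by $(2\tau_n+2\tau)^{1/2}$ and therefore need $\sup_n\mathbf{E}\tau_n^{p/2}<\infty$, which you correctly obtain from the same inequality, whereas the paper uses $|G_n'|\le|G'|+|\Delta_n'|$ to get $|\tau_n-\tau|\le V_n+2\tau^{1/2}V_n^{1/2}$, a bound that involves only $\tau$.
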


The feasibility of the condition $G_{n}\underset{n\to\infty}{\longrightarrow}G$
in $\mathbf{H}^{p}(\mathbb{D})$ and more will be discussed in the
proof of theorem \ref{stability}. 

\section{Tools and proofs}

We begin by recalling the mathematical tools required to show our
main result. 
\begin{defn}
We say that a sequence of probability distributions $(\mu_{n})$ converges
weakly to $\mu$ if the distribution functions $F_{n}(x):=\mu_{n}((-\infty,x])$
converge to $F(x):=\mu((-\infty,x])$ at every continuity point of
$F$. 
\end{defn}

Weak convergence is strictly weaker than setwise convergence. For
further details on convergence of measures, see \cite{billingsley2013convergence}.
The following fact can be found in \cite{resnick2008extreme}. It
is a key ingredient of our analysis. 
\begin{thm}
\label{thm:clue thm}The sequence of quantile functions $q_{n}$ converges
to $q$ almost everywhere on $(0,1)$. 
\end{thm}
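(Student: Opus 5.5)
We need to prove that if $\mu_n \Rightarrow \mu$, then $q_n(u)\to q(u)$ at almost every $u\in(0,1)$. The plan is to show convergence at every continuity point of $q$. Since $q$ is nondecreasing, it has at most countably many discontinuities, so this gives convergence almost everywhere.

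Fix $u\in(0,1)$ at which $q$ is continuous, and fix $\varepsilon>0$.

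\emph{Lower bound.} Since the discontinuities of $F$ are countable, we can choose a continuity point $x$ of $F$ with $q(u)-\varepsilon<x<q(u)$. Because $x<q(u)$, the definition of $q$ as an infimum gives $F(x)<u$. By weak convergence $F_n(x)\to F(x)$, so $F_n(x)<u$ for all large $n$. For any $y\le x$ we then have $F_n(y)\le F_n(x)<u$, hence $q_n(u)\ge x>q(u)-\varepsilon$ for large $n$. Thus $\liminf_n q_n(u)\ge q(u)$.

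\emph{Upper bound.} This direction is where continuity of $q$ at $u$ is needed. Pick $u'\in(u,1)$ with $q(u')<q(u)+\varepsilon/2$, which is possible by right-continuity of $q$ at $u$. Choose a continuity point $y$ of $F$ with $q(u')<y<q(u)+\varepsilon$. Since $F$ is right-continuous and nondecreasing, $F(q(u'))\ge u'$, and therefore $F(y)\ge u'>u$. By weak convergence $F_n(y)\to F(y)$, so $F_n(y)>u$ for all large $n$. This gives $q_n(u)\le y<q(u)+\varepsilon$. Thus $\limsup_n q_n(u)\le q(u)$.

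Combining the two bounds yields $q_n(u)\to q(u)$ at every continuity point $u$ of $q$.

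The only delicate step is the upper bound. It uses right-continuity of $q$ at $u$, which is why convergence can fail at jump points of $q$ (flat pieces of $F$). Since those jump points form a countable, hence Lebesgue-null, set, convergence holds almost everywhere on $(0,1)$.
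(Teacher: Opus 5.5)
Your proof is correct and complete. The lower bound holds at every $u$; the upper bound correctly uses right-continuity of $q$ at $u$ together with $F(q(u'))\ge u'$; and countability of the discontinuities of the monotone $q$ gives almost-everywhere convergence. The paper gives no argument of its own and simply cites Resnick, whose standard proof (generalized inverses converge at continuity points of the limiting inverse) is this same two-sided sandwich using continuity points of $F$.
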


\begin{defn}
The Hilbert transform of a real valued $2\pi$- periodic function
$f$ is defined by 
\[
H\{f\}(x):=PV\left\{ \frac{1}{2\pi}\int^{\pi}_{-\pi}f(x-t)\cot(\frac{t}{2})dt\right\} =\lim_{\eta\rightarrow0}\frac{1}{2\pi}\int_{\eta\leq|t|\leq\pi}f(x-t)\cot(\frac{t}{2})dt
\]
where $PV$ denotes the Cauchy principal value. The Hilbert transform
does exist a.e whenever $p\geq1$. However, it is a bounded operator
on $L^{p}_{2\pi}$ only when $p>1$. The case $p=1$ is irregular,
and $H$ becomes unbounded. For all proofs and properties of the Hilbert
transform, we refer the reader to \cite{butzer1971hilbert,king2009hilbert}. 
\end{defn}

\begin{defn}
Let $f$ be analytic on the unit disc $\mathbb{D}$ and $p>0$. The
$p$-th Hardy norm of $f$ is 
\begin{equation}
\Vert f\Vert_{\mathbf{H}^{p}(\mathbb{D})}:=\sup_{0\le r<1}\Big\{\frac{1}{2\pi}\int^{2\pi}_{0}\big|f(re^{i\theta})\big|^{p}\,d\theta\Big\}^{\frac{1}{p}}.\label{hardy}
\end{equation}
The supremum is well-defined\cite{Rudin2001}. The Hardy space, denoted
by $\mathbf{H}^{p}(\mathbb{D})$, consists of analytic $f$ with $\Vert f\Vert_{\mathbf{H}^{p}(\mathbb{D})}<+\infty$.
A key fact of Hardy spaces is the boundary trace: if $\Vert f\Vert_{\mathbf{H}^{p}(\mathbb{D})}<+\infty$,
then the radial boundary values $f^{*}(e^{i\theta}):=\lim_{r\to1}f(re^{i\theta})$
exists a.e.\ on $\mathbb{S}$. Moreover 
\[
\|f^{*}\|_{L^{p}(\mathbb{S})}=\Vert f\Vert_{\mathbf{H}^{p}(\mathbb{D})}.
\]
When $p>1$, $\mathbf{H}^{p}(\mathbb{D})$ identifies with $L^{p}_{2\pi}$
in the following way: if $\xi\in L^{p}_{2\pi}$ then the map
\begin{equation}
\Psi(re^{\theta i})=\frac{1}{2\pi}\int^{2\pi}_{0}\frac{1+re^{(\theta-t)i}}{1-re^{(\theta-t)i}}\xi(t)dt\label{Riesz-Herglotz Representation}
\end{equation}
belongs to $\mathbf{H}^{p}(\mathbb{D})$ and $\Re(\Psi^{*})=\xi$.
When $p=1$, $\Psi$ is analytic in the unit disc but not necessarily
an element of $\mathbf{H}^{1}(\mathbb{D})$. We recommend \cite{duren2000theory,butzer1971hilbert,king2009hilbert,mcgovern1980hilbert,pandey2011hilbert}
for a concise background on Hardy spaces as well as their connections
with the Hilbert transform. The following result illustrates a deep
connection between Hardy spaces and planar Brownian motion. 
\end{defn}

\begin{thm}
\label{boundary dist}\cite{boudabra2019remarks,BOUDABRA2026} Let
$f$ be a univalent function in the unit disc with $f(0)=0$, and
set $U=f(\mathbb{D})$. Let $(Z_{t})_{t\geq0}$ be a planar Brownian
motion and $\tau$ be its exit time from $U$. Then 
\[
Z_{\tau}\sim f^{*}(\xi)
\]
where $\xi$ is any random variable uniformly distributed in $\mathbb{S}^{1}$.
In particular 
\[
\mathbf{E}\left(\vert Z_{\tau}\big|^{p}\right)=\Vert f\Vert^{p}_{\mathbf{H}^{p}(\mathbb{D})}.
\]
 
\end{thm}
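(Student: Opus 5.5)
The plan is to transport the problem to the unit disc with L\'evy's theorem (Theorem \ref{thm:conformal}) and to use rotation invariance of Brownian motion there. Let $(B_{t})_{t\ge0}$ be a planar Brownian motion started at $0$, and let $T$ be its exit time from $\mathbb{D}$. By rotational invariance, $B_{T}$ is uniformly distributed on $\mathbb{S}^{1}$. Applying Theorem \ref{thm:conformal} to $f$ on $\mathbb{D}$ gives a planar Brownian motion $W$ with $f(B_{t})=W_{\sigma(t)}$ for $t<T$, where $\sigma(t)=\int_{0}^{t}|f'(B_{s})|^{2}ds$. Since $f$ is univalent and $f(0)=0$, the path $f(B_{t})$, $t<T$, is a Brownian path started at $0$ that stays in $U$. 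The goal is to show that $\sigma(T)$ equals the exit time $\tau$ of $W$ from $U$, and that $W_{\tau}=f^{*}(B_{T})$ almost surely. This would give $Z_{\tau}\sim f^{*}(\xi)$ with $\xi=B_{T}$.

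\textbf{Step 1 (existence of the boundary limit).} Show that $\lim_{t\uparrow T}f(B_{t})$ exists almost surely and equals $f^{*}(B_{T})$. Brownian motion may approach $\partial\mathbb{D}$ tangentially, so the radial limit cannot be used directly. I would first use that univalent maps lie in $\mathbf{H}^{q}(\mathbb{D})$ for every $q<\tfrac12$, so $f$ is in the Nevanlinna class. I would then invoke Doob's theorem: for functions of bounded type, the limit along almost every Brownian path exists and coincides with the nontangential limit at the exit point. An alternative route is to write $f$ as a quotient of bounded analytic functions. For bounded analytic $g$, each of $\Re g(B_{t\wedge T})$ and $\Im g(B_{t\wedge T})$ is a bounded martingale, hence converges almost surely. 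Identifying that limit with $g^{*}(B_{T})$ uses that $g(B_{T\wedge t})$ is the conditional expectation of $g^{*}(B_{T})$, via the Poisson representation of bounded harmonic functions. I expect this step to be the main obstacle, since it is where the regularity of $f$ at the boundary enters.

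\textbf{Step 2 (identifying $\sigma(T)$ with $\tau$).} Suppose $\sigma(T)=\infty$ on some event of positive probability. On that event, $W_{s}$ would converge as $s\to\infty$, by Step 1. This contradicts the neighbourhood recurrence of planar Brownian motion, so $\sigma(T)<\infty$ almost surely. Then $W_{\sigma(T)}=\lim_{t\uparrow T}f(B_{t})$. Because $f$ is a homeomorphism of $\mathbb{D}$ onto $U$ and $B_{t}$ leaves every compact subset of $\mathbb{D}$, the limit point lies in $\partial U$ (or is $\infty$, which is excluded by the finiteness of the limit). Since $W$ stays in $U$ on $[0,\sigma(T))$, we get $\sigma(T)=\tau$, and hence $Z_{\tau}\stackrel{d}{=}W_{\tau}=f^{*}(B_{T})$.

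\textbf{Step 3 (moments).} From the distributional identity, $\mathbf{E}|Z_{\tau}|^{p}=\frac{1}{2\pi}\int_{0}^{2\pi}|f^{*}(e^{i\theta})|^{p}d\theta$. When $f\in\mathbf{H}^{p}(\mathbb{D})$, this equals $\Vert f\Vert_{\mathbf{H}^{p}(\mathbb{D})}^{p}$ by the boundary-trace identity recalled above. In general, Fatou's lemma gives $\Vert f^{*}\Vert_{p}\le\Vert f\Vert_{\mathbf{H}^{p}}$. For the reverse inequality, let $T_{r}$ be the exit time of $B$ from $r\mathbb{D}$. The process $|f(B_{t})|^{p}$ is subharmonic along the path, and $|f(B_{t\wedge T})|^{p}\to|f^{*}(B_{T})|^{p}$. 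If $\mathbf{E}|f^{*}(B_{T})|^{p}<\infty$, I would argue that $|f(B_{T_{r}})|^{p}$ is dominated in expectation by $\mathbf{E}|f^{*}(B_{T})|^{p}$, which would give $M_{p}(r)\le\Vert f^{*}\Vert_{p}^{p}$ for every $r<1$. This domination is where care is needed. A cleaner way is the Burkholder--Gundy/Doob maximal theory for the Brownian image $W$: it gives $\mathbf{E}\sup_{t<T}|f(B_{t})|^{p}\lesssim\mathbf{E}|W_{\tau}|^{p}$, and Brownian motion in a simply connected domain exiting with an $L^{p}$ endpoint has an $L^{p}$ maximal function. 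Dominated convergence then yields equality in the extended sense, with both sides possibly infinite.
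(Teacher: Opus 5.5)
Your Steps 1 and 2 are correct, and they are essentially the standard argument behind this cited result. The paper gives no proof of its own, but the disc exit-point coupling it uses in the proof of Theorem~\ref{stability} is exactly your identification $\sigma(T)=\tau$, $W_{\tau}=f^{*}(B_{T})$. In the quotient version of Step 1, add one detail: the zero-free denominator $h\in\mathbf{H}^{\infty}$ has $h^{*}\neq0$ a.e., so $h^{*}(B_{T})\neq0$ a.s. With this, you obtain $Z_{\tau}\sim f^{*}(\xi)$. You also obtain the moment identity whenever $f\in\mathbf{H}^{p}(\mathbb{D})$ is known in advance.

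The gap is the reverse inequality in Step 3. You need to show that $f^{*}\in L^{p}$ forces $f\in\mathbf{H}^{p}(\mathbb{D})$, which is what the identity asserts when both sides may be infinite. Neither of your routes proves this.

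\begin{itemize}
\item Doob's inequality bounds $\mathbf{E}\sup_{t}|W_{t\wedge\tau}|^{p}$ by $\sup_{t}\mathbf{E}|W_{t\wedge\tau}|^{p}$, and only for $p>1$. Replacing the right side by $\mathbf{E}|W_{\tau}|^{p}$ needs $W_{t\wedge\tau}=\mathbf{E}(W_{\tau}\mid\mathcal{F}_{t})$, which is exactly what is in question.
\item Burkholder--Davis--Gundy compares the maximal function with $\tau^{1/2}$, not with $W_{\tau}$.
\item Your quoted fact that Brownian motion in a simply connected domain with an $L^{p}$ exit point has an $L^{p}$ maximal function is, by Burkholder's theorem, equivalent to the claim you are proving.
\end{itemize}

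Bounded type, which is all Step 1 uses, is not enough. Take $f(z)=\exp\big(\frac{1+z}{1-z}\big)$. It is of bounded type with $|f^{*}|=1$ a.e., yet $|f|^{p}>1=P[|f^{*}|^{p}]$ on $\mathbb{D}$, and $f\notin\mathbf{H}^{p}(\mathbb{D})$ for every $p>0$. So your proposed domination $\mathbf{E}|f(B_{T_{r}})|^{p}\le\mathbf{E}|f^{*}(B_{T})|^{p}$ fails for it.

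The missing input is that univalence gives more than bounded type. Since $f\in\mathbf{H}^{q}(\mathbb{D})$ for $q<\frac{1}{2}$, $f$ lies in the Smirnov class $N^{+}$. Hence $f=IO$ with $I$ inner and $O$ outer, and Jensen's inequality gives
\[
|f(z)|^{p}\le|O(z)|^{p}=\exp\Big(P\big[\log|f^{*}|^{p}\big](z)\Big)\le P\big[|f^{*}|^{p}\big](z),\qquad z\in\mathbb{D},
\]
where $P[\cdot]$ denotes the Poisson integral. Thus $P[|f^{*}|^{p}]$ is a harmonic majorant of $|f|^{p}$. It follows that
\[
\frac{1}{2\pi}\int_{0}^{2\pi}|f(re^{i\theta})|^{p}d\theta=\mathbf{E}|f(B_{T_{r}})|^{p}\le\mathbf{E}|f^{*}(B_{T})|^{p}\qquad\text{for every } r<1,
\]
which is precisely the domination you were after; this is Smirnov's theorem. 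Combined with your Fatou bound, this completes Step 3 and gives the identity in $[0,+\infty]$.
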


\subsection{Proof of theorem \ref{stability}.}

In order to prove theorem \ref{stability}, which concerns the $\mu_{n}$-domains
$U_{n}:=G_{n}(\mathbb{D})$ and the target $\mu$-domain $U=G(\mathbb{D})$,
we use the canonical coupling induced by a common exit point of Brownian
motion from the unit disc $\mathbb{D}$.
\begin{proof}
By conformal invariance principle stated in theorem \ref{thm:conformal},
the image of planar Brownian motion under a nonconstant analytic map
is Brownian motion with a time-change \cite{revuz2013continuous}.
That is, let $(B_{t})_{0\leq t\leq\tau_{\mathbb{D}}}$ be a planar
Brownian motion running inside the unit disc (killed on the unit circle).
By Lévy\textquoteright s theorem \ref{thm:conformal}, there exist
Brownian motions $(Z^{[n]}_{t})_{t}$ in $U_{n}$ and $(Z_{t})_{t}$
in $U$ such that 
\[
Z^{[n]}_{\sigma_{n}(t)}=G_{n}(B_{t})_{0\leq t\leq\tau_{\mathbb{D}}}
\]
and 
\[
Z_{\sigma(t)}=G(B_{t})_{0\leq t\leq\tau_{\mathbb{D}}}
\]
are two time changed planar Brownian paths running inside $U_{n}$
and $U$, where $\sigma_{n}$ and $\sigma$ are the time changes as
in theorem \ref{thm:conformal}, i.e. 
\[
\sigma(t)=\int^{t}_{0}\big|G'(B_{s})\big|^{2}ds,\qquad\sigma_{n}(t)=\int^{t}_{0}\big|G_{n}'(B_{s})\big|^{2}ds.
\]
This construction makes the processes coupled. In particular, 
\[
\tau=\sigma(\tau_{\mathbb{D}}),\qquad\tau_{n}=\sigma_{n}(\tau_{\mathbb{D}}).
\]
Equivalently, $\tau$ and $\tau_{n}$ are the quadratic variations
of the martingales $G(B_{s})$ and $G_{n}(B_{s})$ up to time $\tau_{\mathbb{D}}$.
Let $\theta\in(-\pi,\pi)$ be defined by $e^{\theta i}:=B_{\tau_{\mathbb{D}}}$.
Then $\theta$ uniformly distributed in $(-\pi,\pi)$ since $(B_{t})_{t}$
starts at $0$. By theorem \ref{boundary dist} we get
\[
Z_{\tau}=G^{*}(e^{\theta i}),\,\,\,\,Z^{[n]}_{\tau_{n}}=G^{*}_{n}(e^{\theta i}).
\]
Hence 
\[
\mathbf{E}\left(\big|Z^{[n]}_{\tau_{n}}-Z_{\tau}\big|^{p}\right)=\int^{\pi}_{-\pi}\big|G^{*}_{n}(e^{\theta i})-G^{*}(e^{\theta i})\big|^{p}\frac{d\theta}{2\pi}=\|G^{*}_{n}-G^{*}\|^{p}_{L^{p}(\mathbb{S}^{1})}=\Vert G_{n}-G\Vert^{p}_{\mathbf{H}^{p}(\mathbb{D})}\underset{n\to\infty}{\longrightarrow}0.
\]
Thus, $\mathscr{C}_{1}$ is established. For $\mathscr{C}_{2}$, write
$\Delta_{n}:=G_{n}-G$ .Then 
\[
|\tau_{n}-\tau|=\bigg|\int^{\tau_{\mathbb{D}}}_{0}\Big(|G_{n}'|^{2}-|G'|^{2}\Big)(B)ds\bigg|\le V_{n}+2\int^{\tau_{\mathbb{D}}}_{0}|G'(B_{s})||\Delta_{n}'(B_{s})|\,ds
\]
with 
\[
V_{n}=\int^{\tau_{\mathbb{D}}}_{0}|\Delta_{n}'(B_{s})|^{2}ds.
\]
By the Cauchy--Bunyakovsky--Schwarz inequality, 
\[
\int^{\tau_{\mathbb{D}}}_{0}|G'(B_{s})||\Delta_{n}'(B_{s})|ds\le\tau^{\frac{1}{2}}V_{n}{}^{\frac{1}{2}}.
\]
Hence, with $\delta=\frac{p}{2}>0$ and for some constant $\kappa_{\delta}>0$,
\[
|\tau_{n}-\tau|^{\delta}\le\kappa_{\delta}\left(V^{\delta}_{n}+\tau^{\frac{\delta}{2}}V^{\frac{\delta}{2}}_{n}\right).
\]
Taking expectations and another round of the Cauchy--Bunyakovsky--Schwarz
yields 
\[
\mathbf{E}\left(|\tau_{n}-\tau|^{\delta}\right)\le\kappa_{\delta}\left(\mathbf{E}\left(V^{\delta}_{n}\right)+\mathbf{E}(\tau^{\delta})^{\frac{1}{2}}\mathbf{E}\left(V^{\delta}_{n}\right)^{\frac{1}{2}}\right).
\]
The famous Burkholder-Davis-Grundy inequality for analytic martingales
\cite{BurkholderDavisGundy1972} \cite{burkholder1977exit}, guarantees
the existence of constants $0<c_{p}\le C_{p}<\infty$ (independent
of $n$) such that for any analytic function $h$ with $h(0)=0$,
\begin{equation}
c_{p}\Vert h\Vert^{p}_{\mathbf{H}^{p}(\mathbb{D})}\le\mathbf{E}\left(\int^{\tau_{\mathbb{D}}}_{0}|h'(B_{s})|^{2}ds\right)^{\delta}\le C_{p}\Vert h\Vert^{p}_{\mathbf{H}^{p}(\mathbb{D})}.\label{bdg}
\end{equation}
Applying this with $h=\Delta_{n}$ and with $h=G$ gives 
\[
\mathbf{E}\left(V^{\delta}_{n}\right)\lesssim\|\Delta^{*}_{n}\|^{p}_{L^{p}(\mathbb{S})}\underset{n\to\infty}{\longrightarrow}0,\qquad\mathbf{E}(\tau^{\delta})\lesssim\|G^{*}\|^{p}_{L^{p}(\mathbb{S})}<\infty,
\]
which implies $\mathbf{E}\left(|\tau_{n}-\tau|^{\delta}\right)\underset{n\to\infty}{\longrightarrow}0$,
i.e., $\tau_{n}\underset{n\to\infty}{\longrightarrow}\tau$ in $L^{\delta}$.
That is, $\mathscr{C}_{2}$ is proved.
\end{proof}

\begin{rem}
Theorem \ref{stability} is genuinely a stability result in the Hardy
topology. For example, local uniform convergence of the conformal
maps is not sufficient to guarantee $p$-Brownian convergence. Indeed,
let

\[
U:=\{z\in\mathbb{C}\mid\ \Re(z)>-1\},\qquad U_{n}:=\{z\in\mathbb{C}\mid\vert z-n\vert<n+1\},\qquad n\ge1.
\]
Then each $U_{n}$ is simply connected, $0\in U_{n}$, and
\[
U_{n}\uparrow U.
\]
A normalized conformal map $G_{n}:\mathbb{D}\to U_{n}$ is given by
\[
G_{n}(z)=\frac{(2n+1)z}{(n+1)-nz},
\]
while the limit map $G:\mathbb{D}\to U$ is

\[
G(z)=\frac{2z}{1-z}.
\]
It is immediate that $G_{n}(0)=G(0)=0$, and one checks that
\[
G_{n}\longrightarrow G\qquad\text{locally uniformly on }\mathbb{D}.
\]
However, $G\notin\mathbf{H}^{p}(\mathbb{D})$ for any $p\ge1$. Indeed,
\[
G^{*}(e^{i\theta})=\frac{2e^{i\theta}}{1-e^{i\theta}},
\]
and hence $|G^{*}(e^{i\theta})|^{p}$ is not integrable near $0$
whenever $p\ge1$. Let $\tau_{n}$ and $\tau$ denote the exit times
from $U_{n}$ and $U$, respectively, for Brownian motion started
at $0$. Since
\[
U=\{z\in\mathbb{C}:\ \Re(z)>-1\},
\]
the exit time $\tau$ is exactly the hitting time of $-1$ by a one-dimensional
Brownian motion. Therefore
\[
\mathbb{\mathbf{E}}(\tau^{\delta})<+\infty\quad\Longleftrightarrow\quad\delta<\tfrac{1}{2}.
\]
In particular,
\[
\mathbb{\mathbf{E}}(\tau^{\frac{p}{2}})=+\infty\qquad\text{for every }p\ge1.
\]
On the other hand, each $U_{n}$ is bounded, hence
\[
\mathbb{\mathbf{E}}(\tau^{\frac{p}{2}}_{n})<+\infty.
\]
We claim that for every coupling of $\tau_{n}$ and $\tau$ one has
\[
\mathbb{\mathbf{E}}(|\tau_{n}-\tau|^{\frac{p}{2}})=+\infty.
\]
Set $\delta:=\frac{p}{2}$. If $\frac{1}{2}<\delta\le1$, then for
all $a,b\ge0$,
\[
a^{\delta}\le|a-b|^{\delta}+b^{\delta}.
\]
Applying this with $a=\tau$ and $b=\tau_{n}$, and taking expectations,
gives
\[
\mathbb{\mathbf{E}}(\tau^{\delta})\le\mathbb{\mathbf{E}}(|\tau-\tau_{n}|^{\delta})+\mathbb{\mathbf{E}}(\tau^{\delta}_{n}).
\]
Since $\mathbb{\mathbf{E}}(\tau^{\delta})=+\infty$ and $\mathbb{\mathbf{E}}(\tau^{\delta}_{n})<\infty$,
it follows that
\[
\mathbb{\mathbf{E}}(|\tau-\tau_{n}|^{\delta})=+\infty.
\]
If $\delta\ge1$, then Minkowski's inequality gives
\[
\mathbb{\mathbf{E}}(\tau^{\delta})^{\frac{1}{\delta}}\le\mathbb{\mathbf{E}}(|\tau-\tau_{n}|^{\delta})^{\frac{1}{\delta}}+\mathbb{\mathbf{E}}(\tau^{\delta}_{n})^{\frac{1}{\delta}}.
\]
Again, since $\mathbb{\mathbf{E}}(\tau^{\delta})^{\frac{1}{\delta}}=+\infty$
while $\mathbb{\mathbf{E}}(\tau^{\delta}_{n})^{\frac{1}{\delta}}<+\infty$,
we obtain
\[
\mathbb{\mathbf{E}}(|\tau-\tau_{n}|^{\delta})^{\frac{1}{\delta}}=+\infty,
\]
that is,
\[
\mathbb{\mathbf{E}}(|\tau-\tau_{n}|^{\delta})=+\infty.
\]
Thus, for every $p\ge1$, the sequence $(U_{n})_{n}$ does not converge
to $U$ in the $p$-Brownian sense, although the conformal maps $G_{n}$
converge locally uniformly to $G$. This shows that local uniform
convergence of the conformal maps is not enough; the $\mathbf{H}^{p}(\mathbb{D})$
assumption in Theorem \ref{stability} is essential.
\end{rem}

\begin{prop}
\label{prop} If $\varphi_{n}\to\varphi$ in $L^{p}_{2\pi}$ with
$p>1$, then $G_{n}\underset{n\to\infty}{\longrightarrow}G$ in $\mathbf{H}^{p}(\mathbb{D})$. 
\end{prop}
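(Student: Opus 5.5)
The plan is to identify the boundary trace of $G_{n}-G$ explicitly and then use the $L^{p}$-boundedness of the Hilbert transform for $p>1$. By construction, $G_{n}-G=\sum_{k\ge1}(a_{k,n}-a_{k})z^{k}$, where $a_{k,n}-a_{k}$ are the cosine Fourier coefficients of the even function $\psi_{n}:=\varphi_{n}-\varphi$. The constant term is absent because $\mu_{n}$ and $\mu$ have zero mean, which is exactly why $G_{n}(0)=G(0)=0$. So $G_{n}-G$ is the analytic function $\Psi_{n}$ produced from $\xi=\psi_{n}$ by the Riesz--Herglotz representation \eqref{Riesz-Herglotz Representation}, up to the vanishing mean term. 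Indeed, expanding $\frac{1+re^{(\theta-t)i}}{1-re^{(\theta-t)i}}=1+2\sum_{k\ge1}r^{k}e^{k(\theta-t)i}$ and integrating against the even function $\psi_{n}$ returns $\sum_{k}(a_{k,n}-a_{k})r^{k}e^{k\theta i}$.

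Next I use the standard fact that the boundary values satisfy $\Psi_{n}^{*}=\psi_{n}+i\,H\{\psi_{n}\}$ a.e. on $\mathbb{S}^{1}$. This is the conjugate-function relation: the imaginary part of the Herglotz integral is the conjugate Poisson integral, whose radial limit is the Hilbert transform. On trigonometric polynomials it is just $\cos(k\theta)\mapsto\sin(k\theta)$, and it extends to $L^{p}$ by density once boundedness is known. Since $p>1$, $H$ is bounded on $L^{p}_{2\pi}$, say with norm $A_{p}$ (M. Riesz). This gives
\[
\|\Psi_{n}^{*}\|_{L^{p}(\mathbb{S}^{1})}\le\|\psi_{n}\|_{L^{p}}+\|H\{\psi_{n}\}\|_{L^{p}}\le(1+A_{p})\,\|\varphi_{n}-\varphi\|_{L^{p}_{2\pi}}.
\]

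Finally, I conclude with the boundary-trace isometry $\|f^{*}\|_{L^{p}(\mathbb{S})}=\|f\|_{\mathbf{H}^{p}(\mathbb{D})}$ applied to $f=G_{n}-G$. This requires knowing that $G_{n}-G\in\mathbf{H}^{p}(\mathbb{D})$, which the paper states for $\Psi$ built from $\xi\in L^{p}_{2\pi}$ when $p>1$. It then yields $\|G_{n}-G\|_{\mathbf{H}^{p}(\mathbb{D})}\le(1+A_{p})\|\varphi_{n}-\varphi\|_{L^{p}_{2\pi}}\to0$. Alternatively, I can bypass the trace isometry: for each $r<1$, the restriction $\Psi_{n}(re^{i\cdot})$ equals the Poisson and conjugate Poisson integrals of $\psi_{n}$, and the conjugate Poisson operators are uniformly bounded on $L^{p}$ in $r$. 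Taking the supremum over $r$ then gives the same bound directly from definition \eqref{hardy}.

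The main obstacle is the identification step: checking that the analytic function defined by the power series with coefficients $a_{k,n}-a_{k}$ really is the Herglotz integral of $\psi_{n}$, with boundary imaginary part $H\{\psi_{n}\}$. This needs some care because $\varphi_{n}$ is only in $L^{p}$, so its Fourier series need not converge absolutely. I would handle it by working on circles of radius $r<1$, where everything converges absolutely and the coefficient comparison is legitimate, and then pass to $r\to1$ using the uniform $L^{p}$ bounds. Also, $\varphi_{n}\in L^{p}_{2\pi}$ holds because $\int|q_{n}(u)|^{p}du$ is the $p$-th moment of $\mu_{n}$ by \eqref{sampling}, so the hypotheses make sense.
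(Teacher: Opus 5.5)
Your proposal is correct and follows essentially the route the paper intends. The paper gives no argument beyond citing Duren, where the result is M.~Riesz's theorem $\|f\|_{\mathbf{H}^{p}(\mathbb{D})}\lesssim\|\Re(f^{*})\|_{L^{p}}$ for $p>1$ (i.e.\ $L^{p}$-boundedness of the Hilbert transform), applied by linearity to $G_{n}-G$, whose boundary real part is $\varphi_{n}-\varphi$, exactly as you do. Your zero-mean assumption on $\mu_{n}$ is not essential: any constant term $\frac{1}{2\pi}\int_{-\pi}^{\pi}(\varphi_{n}-\varphi)\,d\theta$ is controlled by $\|\varphi_{n}-\varphi\|_{L^{p}_{2\pi}}$ and hence tends to $0$.
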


The proof of Proposition \ref{prop} can be found in \cite{duren2000theory} for
example. The conclusion may fail for $p=1$ because Hilbert transform
is unbounded on $L^{1}_{2\pi}$, but if we add the condition 
\[
H\{\varphi_{n}\}\underset{n\to\infty}{\longrightarrow}H\{\varphi\}\,\,\text{in}\,L^{1}_{2\pi}
\]
then $G_{n}\underset{n\to\infty}{\longrightarrow}G$ in $\mathbf{H}^{1}(\mathbb{D})$.
For further details about the PSEP when $p=1$, see \cite{BOUDABRA2026}.
Using the monotonicity of the quantiles, we give a convergence criterion
to get the condition $\varphi_{n}\to\varphi$ in $L^{p}_{2\pi}$ .
\begin{prop}
\label{proposition} Let $p>0$. If for some $0<\delta<\tfrac{1}{2}$,
\begin{equation}
\max\left(\int^{\delta}_{0}|q_{n}-q|^{p}\,du,\int^{1}_{1-\delta}|q_{n}-q|^{p}\,du\right)\underset{n\to\infty}{\longrightarrow}0,\label{delta integral-1}
\end{equation}
then $\varphi_{n}\underset{n\to\infty}{\longrightarrow}\varphi$ in
$L^{p}_{2\pi}$ .
\end{prop}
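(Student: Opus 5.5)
First reduce the statement to one about quantiles. Since $\varphi_n(\theta)=q_n(|\theta|/\pi)$ and $\varphi$ is defined the same way, the substitution $u=|\theta|/\pi$ on each half of $(-\pi,\pi)$ gives
\[
\|\varphi_n-\varphi\|^p_{L^p_{2\pi}}=\frac{1}{2\pi}\int_{-\pi}^{\pi}\big|q_n(\tfrac{|\theta|}{\pi})-q(\tfrac{|\theta|}{\pi})\big|^p\,d\theta=\int_0^1|q_n-q|^p\,du .
\]
So it suffices to show that $\int_0^1|q_n-q|^p\,du\to0$. Split $(0,1)$ into $(0,\delta)$, $[\delta,1-\delta]$ and $(1-\delta,1)$. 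The two outer pieces tend to $0$ directly by hypothesis \eqref{delta integral-1}, so the whole argument reduces to the middle piece.

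On $[\delta,1-\delta]$, Theorem \ref{thm:clue thm} gives $q_n\to q$ almost everywhere. It is then enough to find a uniform bound for $q_n$ and $q$ on this compact interval and apply dominated convergence. Monotonicity gives it: each $q_n$ is nondecreasing, so $q_n(\delta)\le q_n(u)\le q_n(1-\delta)$ for $u\in[\delta,1-\delta]$, and the problem becomes bounding the two sequences $q_n(\delta)$ and $q_n(1-\delta)$ in $n$.

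I expect this endpoint bound to be the main obstacle, since $\delta$ itself need not be a point of a.e. convergence. To get around it, pick points $u_1\in(0,\delta)$ and $u_2\in(1-\delta,1)$ at which $q_n(u_i)\to q(u_i)$; such points exist because convergence holds a.e. By monotonicity, for $u\in[\delta,1-\delta]$,
\[
q_n(u_1)\le q_n(u)\le q_n(u_2),
\]
and both bounds converge to finite limits. Hence $\sup_n\sup_{[\delta,1-\delta]}|q_n|<\infty$, and likewise $q$ is bounded there. The constant $\big(\sup|q_n|+\sup|q|\big)^p$ is then an integrable dominating function on $[\delta,1-\delta]$, so dominated convergence gives $\int_\delta^{1-\delta}|q_n-q|^p\,du\to0$. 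Adding the three pieces proves the claim for every $p>0$. Note that the hypothesis is needed only for the tails, which is where weak convergence alone gives no control.
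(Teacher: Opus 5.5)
Your proof is correct and follows the same route as the paper. You reduce to $\int_0^1|q_n-q|^p\,du$ via $u=|\theta|/\pi$, let the hypothesis handle the two tails, and treat $[\delta,1-\delta]$ with a uniform bound from monotonicity plus dominated convergence. Your way of getting that bound, through convergence points $u_1\in(0,\delta)$ and $u_2\in(1-\delta,1)$, is more careful than the paper's displayed estimate $\inf_{S_\delta}q-\varepsilon\le q_n(u)\le\sup_{S_\delta}q+\varepsilon$. That estimate can fail, e.g.\ when $q$ jumps at $1-\delta$, although only boundedness is actually needed there.
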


\begin{proof}
Let $S\subset(0,1)$ be the set where $q_{n}\underset{n\to\infty}{\longrightarrow}q$
a.e; then $\lambda(S)=1$. Set $S_{\delta}=[\delta,1-\delta]\cap S$.
Since each $q_{n}$ is nondecreasing and $q_{n}\underset{n\to\infty}{\longrightarrow}q$
a.e., for any $\varepsilon>0$ there exists $N$ with 
\[
\inf_{u\in S_{\delta}}q(u)-\varepsilon\le q_{n}(u)\le\sup_{u\in S_{\delta}}q(u)+\varepsilon\qquad(u\in S_{\delta},~n\ge N),
\]
so $(q_{n})_{n}$ is bounded on $S_{\delta}$. By dominated convergence,
\begin{equation}
\int_{S_{\delta}}|q_{n}-q|^{p}du=\int^{1-\delta}_{\delta}|q_{n}-q|^{p}du\underset{n\to\infty}{\longrightarrow}0.\label{S_delta-1}
\end{equation}
Combining \eqref{S_delta-1} with \eqref{delta integral-1} yields
\[
\|q_{n}-q\|_{L^{p}(0,1)}=\|\varphi_{n}-\varphi\|_{L^{p}_{2\pi}}\underset{n\to\infty}{\longrightarrow}0.
\]
\end{proof}

\noindent The assumption \eqref{delta integral-1} is obviously a
necessary condition. It holds, for instance, when the sequence $(\mu_{n})_{n}$
is of uniform bounded support. 
\begin{rem}[Wasserstein sandwich]
 The $p$-Wasserstein distance between two measures is defined by
\[
W_{p}(\mu,\nu):=\Big(\inf_{\pi\in\Pi(\mu,\nu)}\int|x-y|^{p}\,d\pi(x,y)\Big)^{\frac{1}{p}}.
\]
Let $\omega_{U_{n}}(0,\cdot)$ and $\omega_{U}(0,\cdot)$ denote the
harmonic measures seen from the origin in $U_{n}:=G_{n}(\mathbb{D})$
and $U:=G(\mathbb{D})$, and let $\mu_{n},\mu$ be their real-part
pushforwards. Since $\Re:\mathbb{C}\to\mathbb{R}$ is $1$-Lipschitz,
Wasserstein distances contract under pushforward, hence
\[
W_{p}(\mu_{n},\mu)^{p}\le W_{p}\big(\omega_{U_{n}}(0,\cdot),\omega_{U}(0,\cdot)\big)^{p}.
\]
By the definition of $W_{p}$ and taking the canonical coupling from
the proof of theorem \ref{stability}, we get a coupling cost 
\[
W_{p}\big(\omega_{U_{n}}(0,\cdot),\omega_{U}(0,\cdot)\big)^{p}\le\mathbf{E}(|Z^{[n]}_{\tau_{n}}-Z_{\tau}|^{p})=\Vert G_{n}-G\Vert^{p}_{\mathbf{H}^{p}(\mathbb{D})}.
\]
Finally, in one dimension
\begin{equation}
W_{p}(\mu_{n},\mu)^{p}=\|q_{n}-q\|^{p}_{L^{p}(0,1)}=\|\varphi_{n}-\varphi\|^{p}_{L^{p}(\mathbb{S}^{1})}\label{W and =00005Cvarphi}
\end{equation}
 (see \cite{Ruschendorf1985}). Therefore,
\[
\|\varphi_{n}-\varphi\|_{L^{p}(\mathbb{S}^{1})}=W_{p}(\mu_{n},\mu)\le W_{p}\big(\omega_{U_{n}}(0,\cdot),\omega_{U}(0,\cdot)\big)\le\mathbf{E}(|Z^{[n]}_{\tau_{n}}-Z_{\tau}|^{p})^{\frac{1}{p}}=\Vert G_{n}-G\Vert_{\mathbf{H}^{p}(\mathbb{D})},
\]
which reads as
\[
W_{p}\big(\omega_{U_{n}}(0,\cdot),\omega_{U}(0,\cdot)\big)\in[\|\varphi_{n}-\varphi\|_{L^{p}(\mathbb{S}^{1})},\Vert G_{n}-G\Vert_{\mathbf{H}^{p}(\mathbb{D})}].
\]
\end{rem}

\section{Implementation}

\noindent In this section we provide an implementation framework for
the results established above. Let $\mu$ be a probability measure
with bounded connected support $[a,b]$, possibly with a finite number
of atoms $a_{1}<\dots<a_{s}$. Its quantile function is 
\begin{equation}
q(u)=\sum^{s+1}_{i=1}F^{-1}(u)\,1_{\{u\in(F(a_{i-1}),F(a^{-}_{i}))\}}+\sum^{s}_{k=1}a_{k}\,1_{\{u\in(F(a^{-}_{k}),F(a_{k}))\}},\label{q(u)}
\end{equation}
with the conventions $a_{0}=a$, $a_{s+1}=b$, and $\sum_{\emptyset}=0$.
The first term in the right-hand side of \eqref{q(u)} represents
the continuous part of $\mu$, while the second encodes the discrete
atomic masses. The following elementary result provides a natural
sequence of probability measures $\mu_{n}$ converging weakly to $\mu$.
A mesh $(x^{[n]}_{k})_{0\leq k\leq n}$ of the interval $[a,b]$ is
any ordered family of points 
\[
a=x_{0}<\cdots<x_{n}=b.
\]

\begin{lem}
Let $(x^{[n]}_{k})_{0\leq k\leq n}$ be a mesh such that $\max_{1\le k\le n}(x_{k}-x_{k-1})\underset{n\rightarrow+\infty}{\longrightarrow}0$.
Then the sequence of probability measures 
\[
\mu_{n}=F(a)\delta_{a}+\sum^{n}_{k=1}\big(F(x_{k})-F(x_{k-1})\big)\,\delta_{x_{k}}
\]
converges weakly to $\mu$. 
\end{lem}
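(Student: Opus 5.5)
We will show directly that $F_{n}(x)\to F(x)$ at every continuity point $x$ of $F$. The distribution function of $\mu_{n}$ is an explicit step function. Outside $[a,b]$ there is nothing to prove: for $x<a$ both $F_{n}(x)$ and $F(x)$ vanish, and for $x\ge b$ both equal $1$, because $\mu$ is supported in $[a,b]$ and the total mass of $\mu_{n}$ telescopes to $F(x_{n})=F(b)=1$.

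Now fix $x\in[a,b)$ and let $k=k(n)$ be the index with $x_{k-1}\le x<x_{k}$. Summing the atoms at or below $x$, the sum telescopes to
\[
F_{n}(x)=F(a)+\sum_{j=1}^{k-1}\big(F(x_{j})-F(x_{j-1})\big)=F(x_{k-1}).
\]
Since $F$ is nondecreasing and $x_{k-1}\le x<x_{k}$, this gives the sandwich
\[
F(x_{k-1})\le F(x)\le F(x_{k}) .
\]
The mesh condition yields $x-x_{k-1}\le x_{k}-x_{k-1}\le\max_{j}(x_{j}-x_{j-1})\to0$, so $x_{k(n)-1}\to x$ from the left. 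Hence
\[
F_{n}(x)=F(x_{k(n)-1})\to F(x^{-}).
\]
At a continuity point, $F(x^{-})=F(x)$, and the convergence follows.

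The only delicate point is bookkeeping at the endpoints and at the atoms $a_{i}$. At the endpoint $a$, the atom $F(a)\delta_{a}$ is included precisely so that the telescoping sum is correct, giving $F_{n}(a)=F(a)$ exactly. At the atoms, $F$ may be discontinuous, but those points are excluded by the definition of weak convergence. Therefore no further argument is needed, and the lemma follows.
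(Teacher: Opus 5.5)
Your proof is correct. There is one small imprecision, noted at the end.

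The paper gives no separate proof of this lemma; it calls it elementary. The argument it effectively supplies is in the proof of Proposition \ref{prop:rate}. That proof starts from the same identity $F_n(x_k)=F(x_k)$ that your telescoping produces, but then works on the quantile side. On $(F(x_{k-1}),F(x_k)]$ one has $q_n(u)=x_k$ and $q(u)\in[x_{k-1},x_k]$, and on $(0,F(a)]$ one has $q=q_n=a$. Hence $\sup_{u\in(0,1)}|q_n(u)-q(u)|\le\max_k(x_k-x_{k-1})$. With $U$ uniform on $(0,1)$, this gives $q_n(U)\to q(U)$ everywhere, and by \eqref{sampling} that is exactly $\mu_n\Longrightarrow\mu$.

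Your CDF route checks the paper's own definition of weak convergence directly and needs no quantile machinery. The quantile route yields more: a uniform rate for $q_n-q$, and hence the $L^p$ and Wasserstein bounds the paper uses afterwards. Its cost is relying on the representation \eqref{sampling}.

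The imprecision is the step ``$x_{k(n)-1}\to x$ from the left, hence $F_n(x)=F(x_{k(n)-1})\to F(x^-)$''. This is false at a jump of $F$ that is a mesh point for infinitely many $n$. For example, take $x=a$ when $\mu$ has an atom at $a$: then $F_n(a)=F(a)\neq F(a^-)=0$ for all $n$. What you actually need, and what your estimate $x-x_{k-1}\le h_n:=\max_k(x_k-x_{k-1})$ gives, is the sandwich $F(x-h_n)\le F_n(x)\le F(x)$. This forces $F_n(x)\to F(x)$ at every continuity point. The upper bound $F(x)\le F(x_k)$ in your sandwich is never used.
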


This discretization allocates the total $\mu$-mass of each interval
$(x_{k-1},x_{k}]$ to the grid point $x_{k}$ (and similarly assigns
the mass $F(a)$ to $a$); in particular, atoms are not treated separately.
Consequently, an atom at $a_{0}\in(x_{k-1},x_{k}]$ is represented
at $x_{k}$ for finite $n$, but this snapping error vanishes as the
mesh size $\max_{k}(x_{k}-x_{k-1})\to0$. Since $q_{n}$ is bounded,
it converges to $q$ in every $L^{p}$ with $p\geq1$. Our next result
quantifies the convergence rate. 
\begin{prop}
\label{prop:rate} Let $(x^{[n]}_{k})_{0\leq k\leq n}$ be a mesh.
Define
\[
\mu_{n}:=F(a)\delta_{a}+\sum^{n}_{k=1}\big(F(x_{k})-F(x_{k-1})\big)\delta_{x_{k}},
\]
and let $q_{n}$ be the quantile function of $\mu_{n}$. Then for
any $p\ge1$,
\[
\|q-q_{n}\|_{L^{p}(0,1)}\le\max_{1\le k\le n}(x_{k}-x_{k-1}).
\]
In particular, for the uniform mesh $x_{k}=a+\frac{b-a}{n}k$,
\[
\|q-q_{n}\|_{L^{p}(0,1)}\le\frac{b-a}{n}.
\]
\end{prop}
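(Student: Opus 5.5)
The plan is to bound $|q(u)-q_{n}(u)|$ pointwise by the mesh size for every $u\in(0,1)$, except possibly on a Lebesgue-null set. The $L^{p}$ estimate then follows at once, since $(0,1)$ carries a probability measure:
\[
\|q-q_{n}\|_{L^{p}(0,1)}\le\|q-q_{n}\|_{L^{\infty}(0,1)}\le h_{n}:=\max_{1\le k\le n}(x_{k}-x_{k-1}).
\]
The uniform-mesh statement is the special case $h_{n}=\frac{b-a}{n}$.

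First I identify $F_{n}$, the c.d.f.\ of $\mu_{n}$. Because $\mu_{n}$ places the mass $F(x_{k})-F(x_{k-1})$ at $x_{k}$ and the mass $F(a)$ at $a=x_{0}$, the sum telescopes. This gives $F_{n}(x_{k})=F(x_{k})$ for every $k$, and $F_{n}$ is constant on each $[x_{k-1},x_{k})$, where it equals $F(x_{k-1})$. Since $F(b)=1$, $\mu_{n}$ is a probability measure supported in $[a,b]$. So $F_{n}$ agrees with $F$ on the grid and is a step function that lags behind $F$ between consecutive grid points.

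Next I compute $q_{n}$ explicitly. Fix $u\in(0,1)$. If $u\le F(a)$, then $q_{n}(u)=a$. Otherwise let $k\ge1$ be the unique index with $F(x_{k-1})<u\le F(x_{k})$; then
\[
q_{n}(u)=\inf\{x:\,F_{n}(x)\ge u\}=x_{k}.
\]
For the same $u$ I locate $q(u)$. Since $F(x_{k})\ge u$, we get $q(u)\le x_{k}$. Since $F(x)\le F(x_{k-1})<u$ for every $x\le x_{k-1}$, no such $x$ belongs to the set defining $q(u)$, so $q(u)\ge x_{k-1}$. Hence
\[
q(u),\,q_{n}(u)\in[x_{k-1},x_{k}],\qquad |q(u)-q_{n}(u)|\le x_{k}-x_{k-1}\le h_{n}.
\]
In the case $u\le F(a)$, both $q(u)$ and $q_{n}(u)$ equal $a$, because $\mu$ is supported in $[a,b]$ (so $F(x)=0$ for $x<a$) and $F(a)\ge u$. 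The bound therefore holds for every $u\in(0,1)$, without exception.

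The only delicate point is the bookkeeping at the endpoints: the mass $F(a)$ placed at $a$, and the intervals $(F(x_{k-1}),F(x_{k})]$, which are empty when $F$ is flat there. Empty intervals contain no $u$, so they require no argument. Atoms of $\mu$ cause no difficulty either, because the argument uses only the values of $F$ at grid points and its monotonicity; in particular the explicit formula \eqref{q(u)} is not needed.
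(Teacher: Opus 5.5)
Your proof is correct and takes essentially the same route as the paper. Both arguments use $F_n=F$ on the grid to get $q_n(u)=x_k$ on $(F(x_{k-1}),F(x_k)]$, trap $q(u)$ in $[x_{k-1},x_k]$, and pass from the pointwise bound to the $L^p$ norm; you additionally treat the interval $(0,F(a)]$ and justify $q_n(u)=x_k$ explicitly, which the paper leaves implicit.
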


\begin{proof}
By construction, the c.d.f $F_{n}$ of $\mu_{n}$ satisfies $F_{n}(x_{k})=F(x_{k})$
for each $k$. Hence $q_{n}(u)=x_{k}$ for $u\in(F(x_{k-1}),F(x_{k})]$.
For such $u$ we have $F(x_{k-1})<u\le F(x_{k})$, so by the definition
of the quantile $q(u)=\inf\{x:\,F(x)\ge u\}$ lies in $[x_{k-1},x_{k}]$.
Therefore 
\[
|q(u)-q_{n}(u)|\le x_{k}-x_{k-1}
\]
 on $(F(x_{k-1}),F(x_{k})]$. Taking the $L^{p}$ norm over $(0,1)$
yields 
\[
\|q-q_{n}\|_{L^{p}(0,1)}\le\max_{1\le k\le n}(x_{k}-x_{k-1}).
\]
\end{proof}

\begin{rem}[Sharpness]
For $\mu=\mathrm{Unif}\left(0,1\right)$ and the uniform mesh, we
have $q(u)=u$ and, with the right-endpoint scheme, $q_{n}(u)=\frac{k}{n}$
on $u\in(\frac{k-1}{n},\frac{k}{n}]$. Then 
\[
\|q-q_{n}\|^{p}_{L^{p}(0,1)}=\sum^{n}_{k=1}\int^{\frac{k}{n}}_{\frac{k-1}{n}}\left(\tfrac{k}{n}-u\right)^{p}\,du=n\int^{1/n}_{0}t^{p}\,dt=\frac{1}{(p+1)\,n^{p}},
\]
so 
\[
\|q-q_{n}\|_{L^{p}(0,1)}=\frac{1}{(1+p)^{\frac{1}{p}}n},
\]
showing the $\frac{1}{n}$ rate is optimal. 
\end{rem}

Using the boundedness of the Hilbert transform for $p>1$, we then
obtain 
\begin{cor}
If the measure $\mu$ is of bounded support and the mesh is uniform
then the estimate 
\[
\Vert G_{n}-G\Vert_{\mathbf{H}^{p}(\mathbb{D})}=O(\frac{1}{n})
\]
holds for $p>1$.
\end{cor}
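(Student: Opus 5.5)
The plan is to chain three earlier results: Proposition \ref{prop:rate}, the identity $\|\varphi_{n}-\varphi\|_{L^{p}_{2\pi}}=\|q_{n}-q\|_{L^{p}(0,1)}$, and the $L^{p}$-boundedness of the Hilbert transform that underlies Proposition \ref{prop}. Since $\varphi(\theta)=q(|\theta|/\pi)$ is even, the substitution $u=|\theta|/\pi$ on each half of $(-\pi,\pi)$ gives, with normalized measure $\frac{d\theta}{2\pi}$,
\[
\|\varphi_{n}-\varphi\|^{p}_{L^{p}_{2\pi}}=\int_{0}^{1}|q_{n}(u)-q(u)|^{p}\,du .
\]
This is the equality already used in the proof of Proposition \ref{proposition} and in \eqref{W and =00005Cvarphi}. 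For the uniform mesh, Proposition \ref{prop:rate} then yields $\|\varphi_{n}-\varphi\|_{L^{p}_{2\pi}}\le\frac{b-a}{n}$.

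Next I will make Proposition \ref{prop} quantitative. Set $\Delta_{n}=G_{n}-G$. Its boundary function is $\Delta_{n}^{*}=\psi_{n}+iH\{\psi_{n}\}$ with $\psi_{n}=\varphi_{n}-\varphi$. The constant term needs a check. Both $G_{n}$ and $G$ have zero constant term, and this is consistent because $a_{0}=\frac{1}{2\pi}\int\varphi=\int_{0}^{1}q=$ mean of $\mu$, which is zero in the PSEP setting. If $\mu_{n}$ does not have exactly mean zero, I will subtract its mean $m_{n}$. Then $|m_{n}-m|\le\|q_{n}-q\|_{L^{1}}\le\frac{b-a}{n}$, so this correction is also $O(1/n)$. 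By the M. Riesz theorem there is a constant $A_{p}$ for $1<p<\infty$ such that $\|H\{\psi\}\|_{L^{p}_{2\pi}}\le A_{p}\|\psi\|_{L^{p}_{2\pi}}$. Together with the boundary identity $\|\Delta_{n}\|_{\mathbf{H}^{p}(\mathbb{D})}=\|\Delta_{n}^{*}\|_{L^{p}(\mathbb{S}^{1})}$, this gives
\[
\|G_{n}-G\|_{\mathbf{H}^{p}(\mathbb{D})}\le\|\psi_{n}\|_{L^{p}}+\|H\{\psi_{n}\}\|_{L^{p}}\le(1+A_{p})\frac{b-a}{n}=O\!\left(\tfrac{1}{n}\right).
\]

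The main obstacle is justifying that the boundary function of $G_{n}-G$ really equals $\psi_{n}+iH\{\psi_{n}\}$, i.e.\ that $G_{n}$ and $G$ are the Riesz--Herglotz extensions \eqref{Riesz-Herglotz Representation} of $\varphi_{n}$ and $\varphi$, up to the constant term. I would verify this through Fourier coefficients. The map $G=\sum a_{k}z^{k}$ with $\varphi=\sum a_{k}\cos(k\theta)$ has boundary real part $\varphi$, because $\varphi$ is even, so no sine terms appear. Its imaginary part is then the conjugate series $\sum a_{k}\sin(k\theta)=H\{\varphi\}$. Everything is linear in $\varphi$, so the same holds for the difference.

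Boundedness of the support ensures that $q,q_{n}\in L^{\infty}\subset L^{p}$, so all objects are well defined. The restriction $p>1$ is needed exactly for the Riesz bound, since the Hilbert transform is unbounded on $L^{1}_{2\pi}$ as noted after Proposition \ref{prop}.
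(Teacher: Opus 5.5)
Your proposal is correct and follows the paper's route: Proposition \ref{prop:rate} gives $\|\varphi_n-\varphi\|_{L^p_{2\pi}}=\|q_n-q\|_{L^p(0,1)}\le\frac{b-a}{n}$, and the M.~Riesz bound for the Hilbert transform ($1<p<\infty$) transfers this estimate to $\|G_n-G\|_{\mathbf{H}^p(\mathbb{D})}$. Your treatment of the possibly nonzero mean $m_n$ of the discretized $\mu_n$ handles a point the paper glosses over; with that correction the real boundary part of $G_n-G$ is $\psi_n-(m_n-m)$ rather than $\psi_n$, so your constant becomes $(2+A_p)(b-a)$ instead of $(1+A_p)(b-a)$, and the $O(1/n)$ conclusion is unaffected.
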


Based on the estimate 
\[
|\tau_{n}-\tau|^{\delta}\le\kappa_{\delta}\left(V^{\delta}_{n}+\tau^{\frac{\delta}{2}}V^{\frac{\delta}{2}}_{n}\right)
\]
appeared in the proof theorem \ref{stability}, $\tau_{n}$ still
converges to $\tau$ in $L^{\delta}$ with rate $\frac{1}{n}$.When
$p=1$ then the rate of convergence of $\Vert G-G_{n}\Vert_{\mathbf{H}^{1}(\mathbb{D})}$
cannot be deduced from that of $\|q-q_{n}\|_{L^{1}(0,1)}$ without
further control on the imaginary part of $G_{n}-G$. The parametrization
of the boundary of $\mu$-domains obtained by Gross' technique is
\[
\theta\in(-\pi,\pi)\longmapsto\big(\Re(G(e^{\theta i})),\Im(G(e^{\theta i}))\big)=\big(\varphi(\theta),\,H\{\varphi\}(\theta)\big),
\]
see \cite{Boudabra2020,gross2019}. For step functions, we require
the transform of indicators. A direct calculation gives 
\begin{equation}
H\{\mathbf{1}_{\{|\cdot|\in(\alpha,\beta)\}}\}(\theta)=\frac{1}{\pi}\ln\left(\frac{\sin\big(\tfrac{\theta-\alpha}{2}\big)\sin\big(\tfrac{\theta+\beta}{2}\big)}{\sin\big(\tfrac{\theta-\beta}{2}\big)\sin\big(\tfrac{\theta+\alpha}{2}\big)}\right).\label{hilbert}
\end{equation}

Thus the Hilbert transform of $\varphi_{n}$ is 
\[
\begin{aligned}H\{\varphi_{n}\}(\theta) & =\sum^{n}_{k=0}x_{k}H\left\{ \mathbf{1}_{\{|\cdot|\in(\theta_{k-1},\theta_{k}]\}}\right\} (\theta)\\
 & =\frac{1}{\pi}\sum^{n}_{k=0}x_{k}\ln\left(\frac{\sin\big(\tfrac{\theta-\theta_{k-1}}{2}\big)\sin\big(\tfrac{\theta+\theta_{k}}{2}\big)}{\sin\big(\tfrac{\theta-\theta_{k}}{2}\big)\sin\big(\tfrac{\theta+\theta_{k-1}}{2}\big)}\right),\qquad\theta\in(-\pi,\pi)
\end{aligned}
\]
where $\theta_{k}=\pi F(x_{k})$ subject to the convention $\theta_{-1}=0$. 

\begin{figure}[H]
\centering \includegraphics[width=8cm,totalheight=8cm,keepaspectratio]{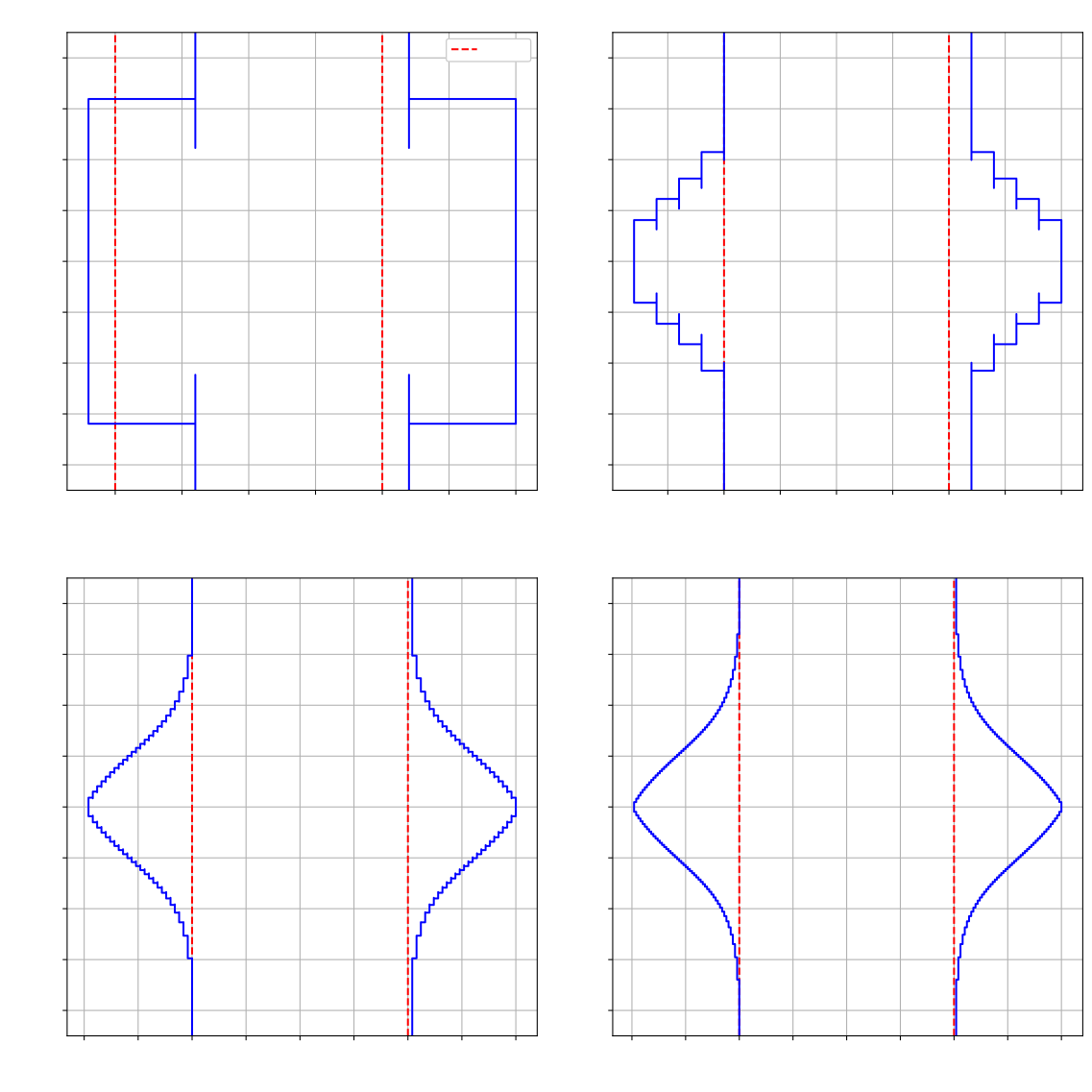}
\caption{Approximation of the $\mu$-domain generated from $\mu=\mathrm{Uni}((-2,-1)\cup(1,2))$
for $n=5,20,100,200$. The final $\mu$-domain contains the vertical
strip $\{-1<x<1\}$. }
\end{figure}

\begin{figure}[H]
\centering \includegraphics[width=8cm,totalheight=8cm,keepaspectratio]{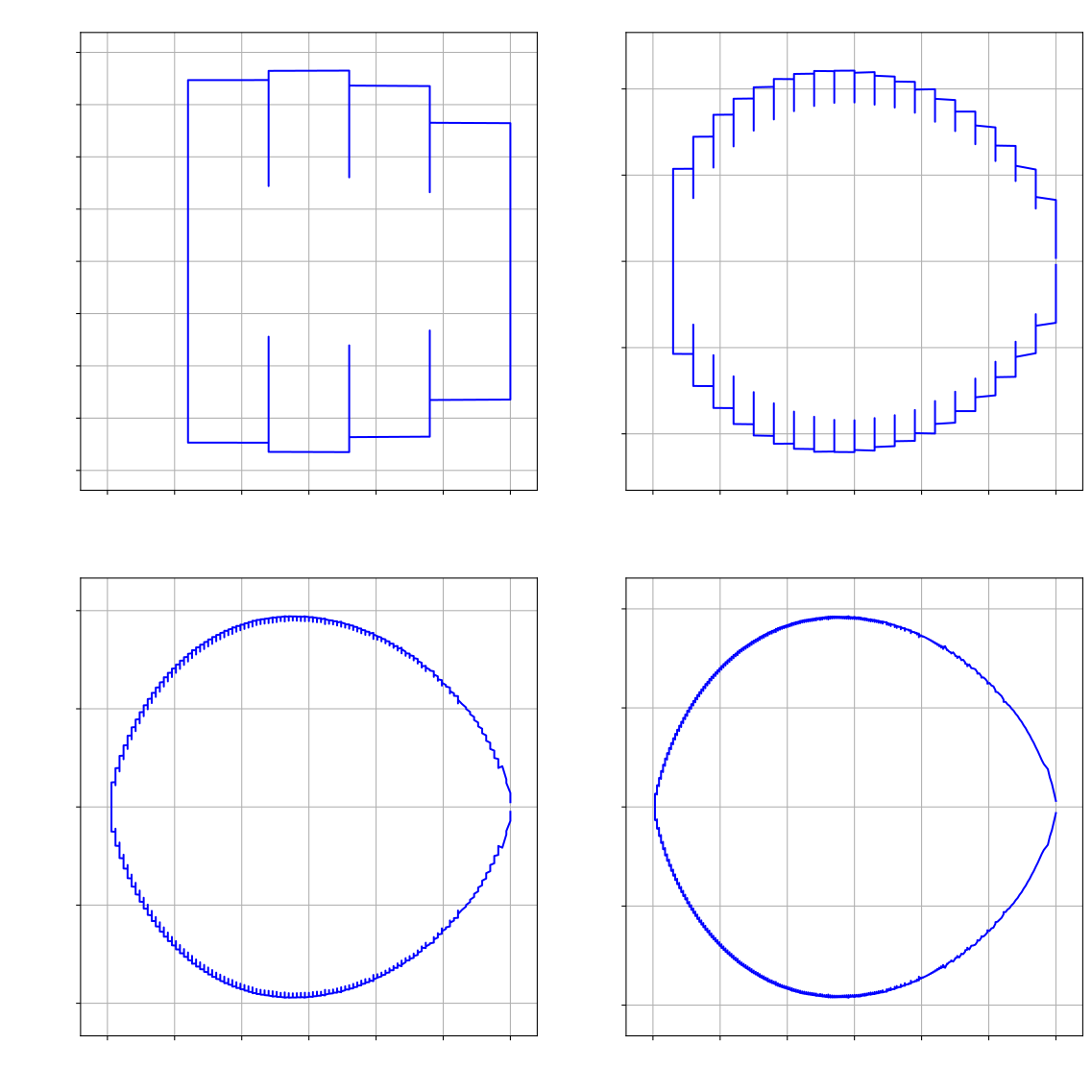}
\caption{Approximation of the $\mu$-domain generated from the truncated exponential
distribution $\mathrm{Exp}(1)$ on $(0,3)$ for $n=5,20,100,200$. }
\end{figure}

\begin{rem} For unbounded distributions, or for distributions with
very large effective support, a uniform discretization on an interval
$[a,b]$ may require very large $n$ before the mesh size $\frac{b-a}{n}$
becomes numerically useful. In such cases, one may first rescale the
measure and work on a normalized domain size, and then recover the
original $\mu$-domain by the scaling properties of the construction.
Note tat this is a numerical normalization device and does not improve
the theoretical order of convergence.
\end{rem}

\section{Comments}

In this work we proposed a numerical framework to approximate the
$\mu$-domain associated with a given distribution. The assumption
on the sequence $(\mu_{n})_{n}$ is among the weakest available in
the literature, which highlights the generality of the approach. Our
theoretical results were complemented with explicit constructions
and convergence rates, as well as practical implementation strategies
based on the Hilbert transform. Numerical simulations demonstrated
the robustness of the method, thereby validating both the effectiveness
and the versatility of the scheme. It is worth noting that the framework
is compatible not only with Gross' original construction, but also
with the domains introduced by Boudabra and Markowsky in \cite{Boudabra2020},
showing that the method is adaptable to distinct formulations of the
planar Skorokhod embedding problem.

\medskip{}

\subsubsection*{Comment 1. }

Theorem \ref{stability} is quantitative: it controls the Brownian
outputs in terms of the Hardy distance between the underlying conformal
maps. Indeed,

\[
\mathbf{E}(|Z^{[n]}_{\tau_{n}}-Z_{\tau}|^{p})=\Vert G_{n}-G\Vert^{p}_{\mathbf{H}^{p}(\mathbb{D})},
\]
and the proof yields

\[
\mathbf{E}(|\tau_{n}-\tau|^{\frac{p}{2}})\le\eta_{p}\Big(\Vert G_{n}-G\Vert^{p}_{\mathbf{H}^{p}(\mathbb{D})}+\Vert G_{n}-G\Vert^{\frac{p}{2}}_{\mathbf{H}^{p}(\mathbb{D})}\Big),
\]
where $\eta_{p}$ depends only on $p$ and $\mathbf{E}(\tau^{\frac{p}{2}})$
. If $f$ is analytic in the unit disc then 
\[
\Vert f\Vert_{\mathbf{H}^{p}(\mathbb{D})}\lesssim\Vert\Re(f^{*})\Vert_{L^{p}(\mathbb{S})}
\]
provided that $p>1$ \cite{duren2000theory}. In particular
\[
\Vert G_{n}-G\Vert^{p}_{\mathbf{H}^{p}(\mathbb{D})}\lesssim\Vert\varphi-\varphi_{n}\Vert_{L^{p}(\mathbb{S})}.
\]
Thus, in Gross' construction, via \ref{W and =00005Cvarphi} in remark
$12$, one obtains the estimate
\[
\mathbf{E}(|\tau_{n}-\tau|^{\frac{p}{2}})\lesssim W_{p}(\mu_{n},\mu)^{p}+W_{p}(\mu_{n},\mu)^{\frac{p}{2}}
\]
Hence our bounds can be rephrased in terms of a transport metric $d(\mu_{n},\mu)=W_{p}(\mu_{n},\mu)$.
Comparing this dependence for other PSEP constructions and other transport
metrics is an interesting direction to investigate.

\medskip{}

\subsubsection*{Comment 2. }

One may wonder why not to consider just one planar Brownian motion
$(Z_{t})_{t\geq0}$ and use the two conditions 
\[
\mathbf{E}\left(\big|Z_{\tau_{n}}-Z_{\tau}\big|^{p}\right)\underset{n\to+\infty}{\longrightarrow}0,\,\mathbf{E}\left(\big|\tau_{n}-\tau\big|^{\frac{p}{2}}\right)\underset{n\to+\infty}{\longrightarrow}0
\]
instead of asking for the existence of a convenient coupling. This
question is legitimate and expected. Using the same planar Brownian
motion in $\mathbb{C}$ indeed gives the correct exit marginals from
$U_{n}$ and $U$, but the problem with this marginal identification
is that the quantity $\mathbf{E}\left(\big|Z_{\tau_{n}}-Z_{\tau}\big|^{p}\right)$
depends on the chosen coupling and need not coincide with the boundary
integral 
\[
\int^{\pi}_{-\pi}\big|G^{*}_{n}(e^{\theta i})-G^{*}(e^{\theta i})\big|^{p}\frac{d\theta}{2\pi}
\]
used in our proof. In fact, our proof crucially uses the disc exit-point
coupling (same $\xi\in\mathbb{S}^{1}$) which identifies the cost
with an $L^{p}$-distance of boundary traces.

However, this triggers the following question: Under what geometric
assumptions does $p$-convergence hold under the same-path planar
Brownian coupling? A typical first case is when 
\begin{equation}
U_{n}\underset{n\rightarrow+\infty}{\nearrow}U.\label{nested}
\end{equation}
We leave this question for future work.

\subsubsection*{Comment 3.}

The $p$-Brownian convergence introduced in this work applies to planar
domains and is formulated with exit times. However, in order to make
such a convergence genuine and meaningful, we required all the domains
(sequence $(U_{n})_{n}$ and target $U$) to share the same starting
point, which without loss of generality, we assumed to be the origin.
In fact, if the starting point, say $x$, lies outside both $U$ and
all $U_{n}$ then $\tau_{n}=\tau=0$ and $Z_{\tau}=Z^{[n]}_{\tau_{n}}=x$,
and hence the convergence conditions become trivial. We therefore
restrict to starting points
\[
x\in\Gamma:=U\cap\left(\bigcap_{n}U_{n}\right).
\]
In the same context, since the first exit time from a domain is the
first hitting time of its boundary, the notion of $p$-Brownian convergence
can be formulated for curves as well. We say that a sequence of planar
curves $(\gamma_{n})_{n}$ converges to a planar curve $\gamma$ if
there is a coupled pair $(Z^{[n]}_{\tau_{n}},\tau_{n})$ and $(Z_{\tau},\tau)$
(similar to the definition \ref{p-conv}) on a common probability
space such that 
\[
\begin{alignedat}{1}\mathbf{E}_{x}\left(\big|Z^{[n]}_{\tau_{n}}-Z_{\tau}\big|^{p}\right) & \underset{n\to+\infty}{\longrightarrow}0\end{alignedat}
,\,\,\,\begin{alignedat}{1}\mathbf{E}_{x}\left(\big|\tau_{n}-\tau\big|^{\frac{p}{2}}\right) & \underset{n\to+\infty}{\longrightarrow}0\end{alignedat}
\]
where
\[
\tau_{n}:=\inf\{t\mid Z^{[n]}_{t}\in\gamma_{n}\},\,\,\tau:=\inf\{t\mid Z_{t}\in\gamma\}
\]
provided the hitting times are almost surely finite. The starting
point $x$ is in the complement of $\gamma\cup\left(\bigcup_{n}\gamma_{n}\right)$.
We speculate that this extension for curves may be more challenging
to investigate compared to domains.\\
\\

Beyond these theoretical questions, there is also potential for applications.
The approximation of $\mu$-domains arises naturally in stochastic
analysis, numerical probability, and in fields such as quantitative
finance where Skorokhod-type embeddings are employed in model calibration.
Having a well-defined and numerically stable notion of domain convergence
may serve as a foundation for algorithms that require precise control
of exit distributions of Brownian motion. In this sense, the framework
presented here not only contributes to the theory of planar embeddings
but may also open the door to future applied developments. 

 \bibliographystyle{plain}
\bibliography{NumericalApproach}

@Book{revuz2013continuous,
  Title                    = {Continuous martingales and Brownian motion},
  Author                   = {D. Revuz and M. Yor},
  Publisher                = {Springer Science \& Business Media},
  Year                     = {2013},
  Volume                   = {293}
}

@InCollection{butzer1971hilbert,
  author    = {Butzer, P. and Nessel, R.},
  booktitle = {Fourier Analysis and Approximation},
  publisher = {Springer},
  title     = {Hilbert Transforms of Periodic Functions},
  year      = {1971},
  pages     = {334--354},
}

@Book{king2009hilbert,
  author    = {King, F.},
  publisher = {Cambridge University Press Cambridge},
  title     = {Hilbert transforms},
  year      = {2009},
}

@Article{burkholder1977exit,
  author    = {Burkholder, D.},
  title     = {Exit times of {B}rownian motion, harmonic majorization, and {H}ardy spaces},
  journal   = {Advances in {M}athematics},
  year      = {1977},
  volume    = {26},
  number    = {2},
  pages     = {182--205},
  publisher = {Academic Press},
}

@Book{duren2000theory,
  author    = {P. L Duren},
  publisher = {Courier Corporation},
  title     = {Theory of $H^p$ spaces},
  year      = {2000},
}

@Article{gross2019,
  author  = {Gross, R.},
  title   = {A conformal {S}korokhod embedding},
  journal = {Electronic Communications in Probability},
  year    = {2019},
}

@Book{Rudin2001,
  title     = {Real and complex analysis (3r ed.)},
  publisher = {McGraw-Hill Education},
  year      = {2001},
  author    = {Rudin, W.},
}

@Article{boudabra2019remarks,
  author  = {Boudabra, M. and Markowsky, G.},
  journal = {Electronic Communications in Probability},
  title   = {Remarks on {G}ross' technique for obtaining a conformal {S}korohod embedding of planar {B}rownian motion},
  year    = {2020},
}

@Article{Boudabra2020,
  author  = {M. Boudabra and G. Markowsky},
  journal = {Journal of Mathematical Analysis and Applications},
  title   = {A new solution to the conformal {S}korokhod embedding problem and applications to the {D}irichlet eigenvalue problem},
  year    = {2020},
  issn    = {0022-247X},
  number  = {2},
  pages   = {124351},
  volume  = {491},
}

@Article{Obloj2004,
  author    = {J. Ob\l\'{o}j},
  journal   = {Probability Surveys},
  title     = {The {S}korokhod embedding problem and its offspring},
  year      = {2004},
  pages     = {321 -- 392},
  publisher = {Institute of Mathematical Statistics and Bernoulli Society},
}

@Book{pandey2011hilbert,
  author    = {P. Jagdish Narayan},
  publisher = {John Wiley \& Sons},
  title     = {The Hilbert transform of Schwartz distributions and applications},
  year      = {2011},
  volume    = {27},
}

@PhdThesis{mcgovern1980hilbert,
  author = {J D. McGovern},
  title  = {The Hilbert Transform},
  year   = {1980},
}

@Book{billingsley2013convergence,
  author    = {P. Billingsley},
  publisher = {John Wiley \& Sons},
  title     = {Convergence of probability measures},
  year      = {2013},
}

@Book{resnick2008extreme,
  author    = {S. I Resnick},
  publisher = {Springer Science \& Business Media},
  title     = {Extreme values, regular variation, and point processes},
  year      = {2008},
  volume    = {4},
}

@InCollection{BurkholderDavisGundy1972,
  author    = {Burkholder, D. L. and Davis, B. J. and Gundy, R. F.},
  booktitle = {Proceedings of the Sixth Berkeley Symposium on Mathematical Statistics and Probability, Volume 2: Probability Theory},
  publisher = {University of California Press},
  title     = {Integral inequalities for convex functions of operators on martingales},
  year      = {1972},
  address   = {Berkeley, CA},
  pages     = {223--240},
}

@Article{Ruschendorf1985,
  author  = {L. R{\"u}schendorf},
  journal = {Zeitschrift f{\"u}r Wahrscheinlichkeitstheorie und Verwandte Gebiete},
  title   = {The {W}asserstein distance and approximation theorems},
  year    = {1985},
  number  = {1},
  pages   = {117--129},
  volume  = {70},
  doi     = {10.1007/BF00532240},
}

@Article{BOUDABRA2026,
  author  = {M. BOUDABRA},
  journal = {Bulletin of the Australian Mathematical Society},
  title   = {A Note on the planar Skorokhod embedding problem},
  year    = {2026},
  pages   = {1--7},
}

@Article{arias2002pointwise,
  author    = {J. ARIAS-DE-REYNA},
  journal   = {Journal of the London Mathematical Society},
  title     = {Pointwise convergence of Fourier series},
  year      = {2002},
  number    = {1},
  pages     = {139--153},
  volume    = {65},
  publisher = {Cambridge University Press},
}

@Article{fefferman1973pointwise,
  author    = {Fefferman, C.},
  journal   = {Annals of Mathematics},
  title     = {Pointwise convergence of {F}ourier series},
  year      = {1973},
  number    = {3},
  pages     = {551--571},
  volume    = {98},
  publisher = {JSTOR},
}

\end{document}